\documentclass[12pt]{amsart}
\usepackage{amsmath}
\usepackage{amssymb}
\usepackage{amsthm}
\usepackage[shortlabels]{enumitem}
\usepackage{color}
\usepackage{tikz}
\usepackage{tikz-qtree}
\usepackage{verbatim}

\newtheorem{proposition}{Proposition}[section]
\newtheorem{corollary}[proposition]{Corollary}
\newtheorem{theorem}[proposition]{Theorem}
\newtheorem{lemma}[proposition]{Lemma}

\theoremstyle{definition}

\newtheorem{remark}[proposition]{Remark}

\newcommand{\insN}{\mathbb{N}}
\newcommand{\inN}{\in\insN}
\newcommand{\insZ}{\mathbb{Z}}
\newcommand{\insQ}{\mathbb{Q}}
\newcommand{\Spec}{\mathrm{Spec}}
\newcommand{\Max}{\mathrm{Max}}
\newcommand{\MaxPath}{\mathrm{MaxPath}}

\newcommand{\Crit}{\mathrm{Crit}}

\DeclareMathOperator{\spr}{SP-rank}
\DeclareMathOperator{\sph}{SPh}

\newcommand{\inverse}{\mathrm{inv}}
\newcommand{\tree}{\mathcal{T}}
\newcommand{\forest}{\mathcal{F}}
\newcommand{\KK}{\mathcal{K}}
\newcommand{\mm}{\mathfrak{m}}
\newcommand{\deriv}{\mathcal{D}}

\title[Ramification tree and almost Dedekind domains]{The ramification tree and almost Dedekind domains of prescribed SP-rank}
\author{Balint Rago}
\author{Dario Spirito}
\address{University of Graz, NAWI Graz, Department of Mathematics and Scientific Computing, Heinrichstraße 36,
8010 Graz, Austria}

\thanks{This work was supported by the Austrian Science Fund FWF, Project Number W1230}

\email{balint.rago@uni-graz.at}
\address{Dipartimento di Scienze Matematiche, Fisiche e Informatiche, Universit\`a di Udine, Udine, Italy}
\email{dario.spirito@uniud.it}
\date{\today}
\keywords{Almost Dedekind domains; SP-domains; SP-rank; extensions of valuations; ramification}\subjclass[2020]{13F05; 13A15; 05C05; 13F30; 12J20}

\begin{document}
\begin{abstract}
Given a valuation $v$ with quotient field $K$ and a sequence $\KK:K_0\subseteq K_1\subseteq\cdots$ of finite extensions of $K$, we construct a weighted tree $\tree(v,\KK)$ encoding information about the ramification of $v$ in the extensions $K_i$; conversely, we show that a weighted tree $\tree$ can be expressed as $\tree(v,\KK)$ under some mild hypothesis on $v$ or on $\tree$. We use this construction to construct, for every countable successor ordinal number $\alpha$, an almost Dedekind domain $D$, integral over $V$ (the valuation domain of $v$) whose SP-rank is $\alpha$. Subsequently, we extend this result to countable limit ordinal numbers by considering integral extensions of Dedekind domains with countably many maximal ideals.
\end{abstract}

\maketitle

\section{Introduction}
Let $D$ be a Dedekind domain. Then, $D$ has \emph{prime factorization}, that is, every ideal of $D$ can be written (uniquely) as a product of prime ideals, and indeed this property characterizes Dedekind domains. A weaker property is \emph{radical factorization}: an ideal $I$ has radical factorization if it can be written as a finite product of radical ideals, and a domain $D$ is an \emph{SP-domain} if every ideal has radical factorization. Every Dedekind domain is an SP-domain, but there are integral domains that are SP-domains without being Dedekind domains; nevertheless, every SP-domain is an \emph{almost Dedekind domain}, meaning that such a domain $D$ is locally Dedekind or, equivalently, locally a discrete valuation ring.

Yet, not every almost Dedekind domain is an SP-domain. The \emph{SP-rank} of an almost Dedekind domain, introduced in \cite{SP-scattered}, is a measure of how far the domain is from having radical factorization: the SP-rank of $D$ is an ordinal number that is $1$ when $D$ is an SP-domain. The SP-rank is defined by associating to each almost Dedekind domain a chain $\{\Delta_\alpha\}_\alpha$ of subsets of the maximal space $\Max(D)$, which also defines a chain $\{T_\alpha\}_\alpha$ of overrings of $D$; the definition of the chain $\{\Delta_\alpha\}_\alpha$ is formally very similar to the definition of the derived sequence of a topological space. In this analogy, the SP-rank of an almost Dedekind domain corresponds to the Cantor-Bendixson rank of a topological space.

So far, the examples of almost Dedekind domains that are not SP-domains had a single maximal ideal that is not critical (see below for the definition), or equivalently the first set $\Delta_1$ of the chain $\{\Delta_\alpha\}$ is a singleton \cite[Examples 3.4.1 and 3.4.2]{fontana-factoring}. In particular, the SP-rank of such a domain is $2$. In this paper, we construct almost Dedekind domains whose SP-rank is any arbitrary countable ordinal number $\alpha$: in particular, we show that if $\alpha$ is a successor ordinal then the domain can be constructed as the integral closure of an (arbitrary) discrete valuation ring in an algebraic extension of its quotient field.

The main idea of the construction is to translate our problem in terms of trees. More precisely, given a discrete valuation $v$ associated to a ring $V$ and an infinite chain $\KK$ of finite extensions of the quotient field of $V$, we define a weighted tree $\tree(v,\KK)$ (which we call the \emph{ramification tree} or $v$ with respect to $\KK$) whose elements are the extensions of $v$ at the members of $\KK$, and, given two extensions $v_1,v_2$, we have $v_1\leq v_2$ if and only if $v_2$ is an extension of $v_1$; the weight of each edge is the ramification index of the extension. Using the possibility to construct extensions of valuations with prescribed ramification and inertia, we show that a tree $\tree$ can be constructed as a ramification tree under some mild hypothesis either on $\tree$ or on $v$ (Propositions \ref{prop:tree->val:balanced} and \ref{prop:tree->val:locbound}). Next, we define an SP-rank also for trees, and we link the properties of $\tree(v,\KK)$ (and, in particular, of its maximal paths) with the properties of the integral closure $A$ of $V$ in the union $K_\infty$ of the elements of $\KK$; thus, the problem of constructing an almost Dedekind domain of SP-rank $\alpha$ reduces to the construction of a tree with SP-rank $\alpha$ (Corollary \ref{cor:spr-corresp}). For successor ordinals, this is accomplished by an inductive construction (Theorems \ref{teor:sprank-balanced} and \ref{teor:sprank-countable}), while for limit ordinals it is necessary to consider a similar construction starting from a Dedekind domain with countably many maximal ideals (Theorem \ref{teor:countable-limit}). In the final Section \ref{sect:topology}, we also analyze this construction from the point of view of the (inverse) topology on the maximal space of $A$.

An alternate construction is given in our concurrent paper \cite{almded-graph}, where we construct almost Dedekind domains of arbitrary (non necessarily countable) SP-rank. However, the method in  \cite{almded-graph} only works for a specific construction, while the one in the present paper can be applied to find almost Dedekind domains in much smaller fields (see for example Corollary \ref{cor:Q})

\section{Preliminaries}
\subsection{Valuations}
Let $K$ be a field. A \emph{valuation} $v$ on $K$ is a map $v:K\longrightarrow\Gamma\cup\{\infty\}$, where $(\Gamma,+)$ is a totally ordered abelian group, such that, for every $x,y\in K$,
\begin{itemize}
\item $v(x)=\infty$ if and only if $x=0$;
\item $v(xy)=v(x)+v(y)$;
\item $v(x+y)\geq\min\{v(x),v(y)\}$.
\end{itemize}
If the map $v$ is surjective, $\Gamma$ is said to be the \emph{value group} of $v$, and is sometimes denoted by $\Gamma_v$. If $\Gamma\simeq\insZ$, then $v$ is said to be \emph{discrete} and $V$ is called a \emph{discrete valuation ring} (DVR).

The \emph{valuation ring} associated to $v$ is the ring $V:=\{x\in K\mid v(x)\geq 0\}$; the quotient field of $V$ is $K$. Conversely, if $V$ is a ring with quotient field $K$ such that, for every $x\in K$, at least one of $x$ and $x^{-1}$ is in $V$, then $V$ is called a valuation ring, and there is a valuation $v$ associated to $V$. A valuation ring is always local; we denote its maximal ideal by $\mm_V$.

Let $L$ be a field extending $K$. An \emph{extension} of $v$ to $L$ is a valuation $w$ such that $w|_K=v$; we also write $v\subseteq w$. If $W$ is the valuation ring associated to $w$, then $W\cap K=V$, and we also say that $W$ is an extension of $V$. Every extension of valuation defines a map of residue fields, $V/\mm_V\longrightarrow W/\mm_W$ and an injective map $\Gamma_v\longrightarrow\Gamma_w$. The degree $[W/\mm_W:V/\mm_V]$ is called the \emph{inertial degree} of the extension (and is denoted by $f(w/v)$) while the index $(\Gamma_w:\Gamma_v)$ is called the \emph{ramification degree}, and is denoted by $e(w/v)$. These degrees are multiplicative, i.e., if $v\subseteq v'\subseteq v''$ is a chain of extensions, then $e(v''/v)=e(v''/v')e(v'/v)$ and $f(v''/v)=f(v''/v')f(v'/v)$.

If $[L:K]<\infty$, $v$ is a discrete valuation on $K$, then $v$ has only finitely many extensions on $L$, say $w_1,\ldots,w_g$; if, moreover, $L$ is separable over $K$, their inertial and ramification degrees are linked by the \emph{fundamental equality} \cite[Chapter 6, Theorem 3]{ribenboim}
\begin{equation}\label{fundeq}
\sum_{i=1}^ge(w_i/v)f(w_i/v)=[L:K].
\end{equation}
If the extension $K\subseteq L$ is Galois, we also have $e(w_i/v)=e(w_j/v)$ and $f(w_i/v)=f(w_j/v)$ for all $i,j$ \cite[Chapter 6, J]{ribenboim}.

Conversely, we can always construct a field extension where a valuation extends with given inertia and ramification; cfr. for example \cite[Chapter 6, Theorem 4]{ribenboim}.
\begin{theorem}\label{teor:extension-valuations}
Let $v_1,\ldots,v_k$ be discrete valuations on a field $K$. Let $g_1,\ldots,g_k$, $e_{ij}$, $f_{ij}$ ($i=1,\ldots,k$, $j=1,\ldots,t_i$) be integers such that $\sum_je_{ij}f_{ij}=n$ for each $i$, for some fixed $n$. Suppose that the residue field $K_i$ of $v_i$ has extensions $L_{ij}$ such that $[L_{ij}:K_i]=f_{ij}$. Then, there is a finite separable extension $L$ of $K$ such that each valuation $v_i$ has exactly $g_i$ extensions, namely $w_{i1},\ldots,w_{ig_i}$, with ramification indices $(\Gamma_{w_{ij}}(L):\Gamma_{v_i}(K))=e_{ij}$ and such that the residue field of $w_{ij}$ is $L_{ij}$.
\end{theorem}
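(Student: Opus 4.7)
The plan is to build $L$ as a simple extension $K[X]/(f(X))$ for a carefully chosen monic separable polynomial $f(X)\in K[X]$ of degree $n$, combining a local construction at each $v_i$ with weak approximation and Krasner's lemma.

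First, I would work locally. For each $i$, pass to the $v_i$-adic completion $\widehat{K}_{v_i}$. Using that $\widehat{K}_{v_i}$ is a complete discretely valued field, I would construct, for each $j=1,\ldots,g_i$, a local separable extension $M_{ij}/\widehat{K}_{v_i}$ with inertia degree $f_{ij}$ and ramification index $e_{ij}$, whose residue field is isomorphic over $K_i$ to the given $L_{ij}$: the unramified part is produced by adjoining a root of a lift of a separable minimal polynomial for a generator of $L_{ij}/K_i$, and the totally ramified part is obtained by adjoining a root of an Eisenstein polynomial of degree $e_{ij}$ (choosing it separable, which is always possible; in positive residue characteristic this step requires a bit of care to avoid wild inseparability). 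Let $g_{ij}(X)\in\widehat{K}_{v_i}[X]$ be a monic separable irreducible polynomial of degree $e_{ij}f_{ij}$ with root generating $M_{ij}$, and set $f_i(X)=\prod_{j=1}^{g_i} g_{ij}(X)\in\widehat{K}_{v_i}[X]$. By the assumption $\sum_j e_{ij}f_{ij}=n$, each $f_i$ has degree $n$.

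Next, I would globalize by approximation. Since the valuations $v_1,\ldots,v_k$ are pairwise independent discrete valuations on $K$, weak approximation lets me find a single monic polynomial $f(X)\in K[X]$ of degree $n$ whose coefficients are simultaneously $v_i$-adically close to the coefficients of $f_i$ for every $i$. For a sufficiently fine approximation, Krasner's lemma ensures that $f(X)$ factors over $\widehat{K}_{v_i}$ with the same pattern as $f_i$: there exist monic irreducibles $\widetilde{g}_{ij}\in\widehat{K}_{v_i}[X]$ of degree $e_{ij}f_{ij}$, very close to $g_{ij}$, with $f=\prod_j\widetilde{g}_{ij}$ over $\widehat{K}_{v_i}$ and $\widehat{K}_{v_i}[X]/(\widetilde{g}_{ij})\cong M_{ij}$. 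The separability of $f$ is preserved by taking the approximation fine enough relative to the discriminant.

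Setting $L:=K[X]/(f(X))$ then produces a separable extension of degree $n$, and for each $i$ the extensions of $v_i$ to $L$ are in bijection with the irreducible factors of $f$ over $\widehat{K}_{v_i}$, so $v_i$ has exactly $g_i$ extensions $w_{i1},\ldots,w_{ig_i}$, each inheriting the ramification index $e_{ij}$, inertia degree $f_{ij}$, and residue field $L_{ij}$ from the local factor $M_{ij}$. The delicate point is the simultaneous control in Step 1 and Step 2: one must choose the local approximations with enough slack so that (i) $f$ remains separable, (ii) Krasner's lemma applies at every $v_i$ at once, and (iii) the isomorphism type of the residue field extension is preserved — not merely its degree. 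This is the main technical obstacle; once it is handled, the rest is a direct verification using the fundamental equality \eqref{fundeq} to confirm that no further extensions of $v_i$ appear.
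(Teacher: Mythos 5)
Your overall strategy --- complete at each $v_i$, build the local extensions $M_{ij}$ over $\widehat{K}_{v_i}$, glue the local minimal polynomials into one global polynomial by weak approximation, and carry the factorization back with Krasner's lemma --- is the standard route to this result, and it is essentially the argument behind the reference the paper cites (the paper itself gives no proof, only the citation to Ribenboim). The three ``delicate points'' you flag at the end are real but routine. What you have omitted is the one step that carries the actual content of the theorem: \emph{why is $f$ irreducible over $K$?} You set $L:=K[X]/(f(X))$ and call it a field extension of degree $n$, but nothing in your construction prevents $f$ from factoring over $K$. If it does, $K[X]/(f)$ is only an \'etale $K$-algebra, a product of fields $L_1\times\cdots\times L_r$, and the local factors $\widetilde{g}_{ij}$ get distributed among the $L_s$, so that no single $L_s$ realizes all of the prescribed data. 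Already in the simplest case $k=1$, $n=2$, $g_1=2$, $e_{1j}=f_{1j}=1$ (``make $v_1$ split''), the obvious approximant of $f_1=(X-a)(X-b)$ obtained by approximating $a,b\in\widehat{K}_{v_1}$ by elements of $K$ is a product of two linear factors over $K$, which is useless.

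This is not a removable technicality: it is exactly the point where a hypothesis on $(K,v_1,\dots,v_k)$ must enter. If $K$ is Henselian (for instance complete) with respect to some $v_i$, every finite extension of $K$ admits a \emph{unique} prolongation of $v_i$, so no irreducible $f$ with $g_i\geq 2$ local factors at $v_i$ can exist and the conclusion fails outright; consequently no argument of the kind you sketch can close without invoking non-Henselianness somewhere, and the irreducibility of $f$ is precisely where it gets invoked. The classical repair, when available, is to impose one further local condition: choose an auxiliary discrete valuation $v_0$ inequivalent to $v_1,\dots,v_k$ and require $f$ to be (close to) an Eisenstein polynomial at $v_0$, which forces irreducibility over $\widehat{K}_{v_0}$ and hence over $K$ without disturbing the behavior at $v_1,\dots,v_k$; alternatively, one must argue directly that among the admissible approximants an irreducible one exists. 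You need to add this step --- and to make explicit whatever hypothesis on $K$ makes it possible --- before the proof is complete; the remaining verifications (exactly $g_i$ extensions via $L\otimes_K\widehat{K}_{v_i}\cong\prod_j\widehat{K}_{v_i}[X]/(\widetilde{g}_{ij})$ and the fundamental equality \eqref{fundeq}) are then as you describe.
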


\subsection{Almost Dedekind domains}
Let $D$ be an integral domain. Then, $D$ is said to be an \emph{almost Dedekind domain} if $D_M$ is a discrete valuation ring for every $M\in\Max(D)$; an almost Dedekind domain is always a one-dimensional Pr\"ufer domain.

Let $D$ be an almost Dedekind domain with quotient field $K$. Any nonzero ideal $I$ of $D$ defines an ideal function
\begin{equation*}
\begin{aligned}
\nu_I\colon\Max(D)\longrightarrow & \insZ\\
M\longrightarrow & v_M(ID_M),
\end{aligned}
\end{equation*}
where $v_M:K\longrightarrow\insZ\cup\{\infty\}$ is the valuation relative to $D_M$ and $v_M(ID_M)=\inf\{v(x)\mid x\in ID_M\}$.

A maximal ideal $M$ of $D$ is said to be \emph{critical} if it does not contain any radical finitely generated ideal, or equivalently if $\sup\nu_I\geq 2$ for every finitely generated ideal $I\subseteq M$. We denote by $\Crit(D)$ the set of critical maximal ideals of $D$; this set is empty if and only if $D$ is an SP-domain (see \cite[Theorem 2.1]{olberding-factoring-SP} or \cite[Theorem 3.1.2]{fontana-factoring}).

We define recursively a chain $\{\Crit^\alpha(D)\}_\alpha$ of subsets of $\Max(D)$ and a chain $\{T_\alpha\}_\alpha$ of overrings of $D$ in the following way, where $\alpha$ is an ordinal number:
\begin{itemize}
\item $T_0:=D$, $\Crit^0(D):=\Max(D)$;
\item if $\alpha=\beta+1$ is a successor ordinal, then
\begin{equation*}
\Crit^\alpha(D):=\{P\in\Max(D)\mid PT_\beta\in\Crit(T_\beta)\};
\end{equation*}
\item if $\alpha$ is a limit ordinal, then
\begin{equation*}
\Crit^\alpha(D):=\bigcap_{\beta<\alpha}\Crit^\beta(D);
\end{equation*}
\item $\displaystyle{T_\alpha:=\bigcap\{D_M\mid M\in\Crit^\alpha(D)\}}$.
\end{itemize}
The set $\{\Crit^\alpha(D)\}_\alpha$ is a descending chain of subsets of $\Max(D)$, while $\{T_\alpha\}_\alpha$ is an ascending chain of subrings of $K$; we call the latter the \emph{SP-derived sequence} of $D$. Moreover, the maximal ideals of $T_\alpha$ are exactly the extensions of the maximal ideals in $\Crit^\alpha(D)$ \cite[Lemma 5.3]{SP-scattered}.

The \emph{SP-height} $\sph(M)$ of a prime ideal $M$ is the smallest ordinal number $\alpha$ such that $MT_\alpha=T_\alpha$, or equivalently the smallest ordinal such that $M\notin\Crit^\alpha(D)$. The \emph{SP-rank} of $D$ is the smallest ordinal number $\alpha$ such that $\Crit^\alpha(D)=\emptyset$; this rank always exists \cite[Theorem 5.1]{boundness}, and it is equal to the supremum of the SP-height of the maximal ideals of $D$. Equivalently, it is the smallest ordinal number $\alpha$ such that $T_\alpha=K$.

\subsection{Topology}
Let $D$ be an integral domain. The \emph{inverse topology} on the spectrum $\Spec(D)$ of $D$ is the topology having, as a subbasis of open sets, the sets $V(I):=\{P\in\Spec(D)\mid I\subseteq P\}$, as $I$ ranges among the finitely generated ideals of $D$. 

We denote by $\Max(D)^\inverse$ the maximal space of $D$, endowed with the inverse topology (i.e., with the subspace topology of the inverse topology of $\Spec(D)$). When $D$ is one-dimensional (in particular, when $D$ is an almost Dedekind domain) then the inverse topology on $\Max(D)$ is Hausdorff, and it coincides with the Zariski topology if and only if the Jacobson radical of $D$ is nonzero \cite[Corollary 4.4.9]{spectralspaces}; in particular, when this happens the space $\Max(D)^\inverse$ is compact.

The sets $\Crit^\alpha(D)$, where $D$ is an almost Dedekind domain, are always closed in $\Max(D)^\inverse$.

\subsection{Trees}
A \emph{tree} is a partially ordered set $(\tree,\leq)$ with a unique minimal element $r$, such that, for every $t\in\tree$, the set $\{s\in\tree\mid s<t\}$ is well-ordered; the order type of this set is called the \emph{height} of $t$. The element $r$ is called the \emph{root} of $\tree$ and is the unique element of height $0$; we also call the elements of $\tree$ its \emph{vertexes}. We denote by $\tree(\alpha)$ the set of elements with height $\alpha$. Throughout the paper, we shall assume that \emph{every vertex has finite height}, that is, that $\tree$ is what is called an \emph{$\omega$-tree}, and that $\tree$ has no maximal elements.

An \emph{edge} of $\tree$ is a pair $(a,b)$ of vertexes such that $a<b$ and there are no elements between $a$ and $b$; we denote the set of edges of $\tree$ by $E(\tree)$. A \emph{path} is a sequence $(a_i)_{i\in I}$ (where $I$ is finite or $\insN$) such that $(a_i,a_{i+1})$ is an edge for every $i\in I$; we call $a_0$ the \emph{starting point} of the path. If $\pi'\subseteq\pi$ are paths, we say that $\pi'$ is a subpath of $\pi$ and that $\pi$ is an extension of $\pi'$; if $\pi$ has no proper extensions, we say that $\pi$ is \emph{maximal}. It is easy to see that, when $\tree$ is an $\omega$-tree, every infinite path is contained in a unique maximal path, and that a path is maximal if and only if it is infinite and its starting point is the root of $\tree$. We denote by $\MaxPath(\tree)$ the set of maximal paths of $\tree$. If $\pi$ is a path and $t\in\pi$, we also say that $\pi$ \emph{passes through} $t$.

By a \emph{weight} on $\tree$ we mean a function $w:\tree\times\tree\longrightarrow\insN$ such that $w(a,b)>0$ if and only if $(a,b)$ is an edge of $\tree$; in particular, $w(a,a)=0$ for all $a$. The function $w$ can also be considered as a function from $E(\tree)$ to $\insN^+$. The \emph{(outbound) weight} of $a\in\tree$ is
\begin{equation*}
w(a):=\sum_{b\in\tree}w(a,b).
\end{equation*}
We say that $\tree$ is:
\begin{itemize}
\item \emph{locally bounded} if, for every $n\in\insN$, the set $\{w(a)\mid a\in\tree(n)\}$ is bounded;
\item \emph{balanced} if, for every $n\in\insN$, $w(a)=w(b)$ for every $a,b\in\tree(n)$;
\item \emph{totally balanced} if $w(a)=w(b)$ for every $a,b\in\tree$.
\end{itemize}

If $\pi=(a_i)_{i\in\insN}$ is a path, the \emph{weight} of $\pi$ is
\begin{equation*}
w(\pi):=\prod_{i=0}^\infty w(a_i,a_{i+1})\in\insN^+\cup\{\infty\}.
\end{equation*}
We say that $\pi$ is:
\begin{itemize}
\item \emph{finitely ramified} if $w(\pi)<\infty$;
\item \emph{unramified} if $w(\pi)=1$.
\end{itemize}
If $\pi$ is finitely ramified, then $w(a_i,a_{i+1})=1$ for all large $i$; we call the largest infinite subpath of $\pi$ that is unramified the \emph{unramified subpath} of $\pi$, and we denote it by $u(\pi)$.

If $a\in\tree$, we set $\{a\}^\uparrow:=\{b\in\tree\mid a\leq b\}$. In particular, $\{a\}^\uparrow$ is a tree with root $a$.

\section{The correspondence}\label{sect:correspond}
Let $v$ be a valuation on a field $K$, with corresponding valuation ring $V$, and consider a sequence
\begin{equation*}
\KK:K=K_0\subsetneq K_1\subsetneq K_2\subsetneq\cdots\subsetneq K_n\subsetneq\cdots
\end{equation*}
of field extensions of $K$ with $[K_{i+1},K_i]<\infty$ for every $i$. The \emph{ramification tree} of $v$ with respect to $\KK$, which we denote by $\tree(v,\KK)$ or $\tree(V,\KK)$, is defined in the following way:
\begin{itemize}
\item the elements of $\tree(v,\KK)$ are the extensions of $v$ at $K_i$, for each $i$;
\item if $v_1,v_2\in\tree(v,\KK)$, we set $v_1\leq v_2$ if $v_2$ is an extension of $v_1$;
\item if $(v_1,v_2)$ is an edge, $w(v_1,v_2)$ is the ramification index of the extension $v_1\subseteq v_2$, i.e.,
\begin{equation*}
w(v_1,v_2)=e(v_2/v_1):=[\Gamma_{v_2}:\Gamma_{v_1}],
\end{equation*}
where $\Gamma_z$ is the value group of $z$.
\end{itemize}

We can also interpret the vertexes of $\tree(v,\KK)$ in a different way: the extensions of $v$ to $K_i$ are in bijective correspondence with the maximal ideals of $A_i$, the integral closure of $V$ in $K_i$, and if $M,N$ are two such ideals, then $M\leq N$ if and only if $M\subseteq N$. 

We use the same terminology also if the chain $\KK$ is finite, i.e., if we only have a finite sequence $K=K_0\subsetneq K_1\subsetneq\cdots\subsetneq K_n$. In this case, the tree $\tree(v,\KK)$ is finite.

\begin{proposition}\label{prop:ramiftree-fund}
Preserve the notation above and let $\tree:=\tree(v,\KK)$. Then, the following hold.
\begin{enumerate}[(a)]
\item\label{prop:ramiftree-fund:edge} If $(v_1,v_2)$ is an edge and $v_1$ is a valuation on $K_i$, then $v_2$ is a valuation on $K_{i+1}$.
\item\label{prop:ramiftree-fund:locbound} $\tree$ is locally bounded.
\end{enumerate}
\end{proposition}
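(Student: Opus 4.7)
My plan is to handle (a) by a direct intermediate-extension argument and (b) by the fundamental equality for valuation extensions.

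For (a), let $(v_1, v_2)$ be an edge with $v_1$ a valuation on $K_i$ and $v_2$ a valuation on $K_j$. Since $v_2$ properly extends $v_1$, necessarily $j > i$ (the case $j = i$ is impossible because extensions to $K_i$ are unique among valuations on $K_i$ extending $v_1$). If $j \geq i+2$, then the restriction $v_3 := v_2|_{K_{i+1}}$ is an extension of $v$ to $K_{i+1}$, hence $v_3 \in \tree$, and since $K_i \subsetneq K_{i+1} \subsetneq K_j$ one has $v_1 < v_3 < v_2$, contradicting that $(v_1, v_2)$ is an edge. So $j = i+1$.

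For (b), I first observe — by iterating (a), or equivalently by checking that the valuations strictly below an extension $v'$ of $v$ to $K_n$ are exactly the restrictions $v'|_{K_0}, \ldots, v'|_{K_{n-1}}$ — that $\tree(n)$ is precisely the set of extensions of $v$ to $K_n$. Fix such a $v' \in \tree(n)$. By (a), the out-neighbors of $v'$ in $\tree$ are the extensions $w_1, \ldots, w_g$ of $v'$ to $K_{n+1}$, so
\begin{equation*}
w(v') \;=\; \sum_{k=1}^{g} e(w_k/v') \;\leq\; \sum_{k=1}^{g} e(w_k/v')\, f(w_k/v') \;\leq\; [K_{n+1}:K_n],
\end{equation*}
the final bound being \eqref{fundeq}. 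Since $[K_{n+1}:K_n]$ depends only on $n$, the set $\{w(v') \mid v' \in \tree(n)\}$ is bounded, proving local boundedness.

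Neither part presents a genuine obstacle: once one recognises that the height of a vertex in $\tree(v,\KK)$ is the index of the field on which it lives, (a) is a one-line unravelling of the edge condition and (b) is the standard bound on the sum of ramification indices. The only mild bookkeeping issue is that \eqref{fundeq} is stated in the separable case in the preliminaries, while I implicitly need the inequality $\sum_k e_k f_k \leq [L:K]$ for an arbitrary finite extension of discrete-valued fields; this inequality is classical and available from the same reference \cite{ribenboim}.
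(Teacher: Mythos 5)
Your proof is correct and follows essentially the same route as the paper's: the same intermediate-restriction argument $v_1 < v_2|_{K_{i+1}} < v_2$ for (a), and the same bound $w(v') \leq \sum_k e(w_k/v')f(w_k/v') \leq [K_{n+1}:K_n]$ via the fundamental equality for (b). Your closing remark about needing only the inequality $\sum_k e_k f_k \leq [L:K]$ in the possibly non-separable case is a sensible precaution that the paper itself glosses over.
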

\begin{proof}
\ref{prop:ramiftree-fund:edge} If $(v_1,v_2)$ is an edge, then $v_2$ is a proper extension of $v_1$, and thus $v_2$ is a valuation on $K_j$ for some $j>i$. if $j>i+1$, then we have $v_1<v_2|_{K_{i+1}}<v_2$, against the fact that $(v_1,v_2)$ is an edge. Thus $j=i+1$, as claimed.

\ref{prop:ramiftree-fund:locbound} Fix $v_1\in\tree(v,\KK)$. Then,
\begin{equation*}
w(v_1)=\sum_{(v_1,v_2)\in E(\tree)}w(v_1,v_2)=\sum_{(v_1,v_2)\in E(\tree)}e(v_2/v_1),
\end{equation*}
By the previous point, if $(v_1,v_2)$ is an edge then $v_2$ is a valuation on $K_{i+1}$; by the fundamental equality \eqref{fundeq}, it follows that $w(v_1)\leq[K_{i+1}:K_i]$. Thus $\tree$ is locally bounded.
\end{proof}

Every $K_i$ is a finite extension of $K$; therefore, if $v$ is discrete, all the integral closures $A_i$ of $V$ in $K_i$ are Dedekind domains. The main object of interest of this paper is their union, or equivalently the integral closure of $V$ in the union of all $K_i$.
\begin{proposition}\label{prop:correspondence}
Preserve the notation above, let $K_\infty:=\bigcup_{i\in\insN}K_i$, and let $A$ be the integral closure of $V$ in $K_\infty$. Let $\tree:=\tree(v,\KK)$ and let $\mathcal{E}(v,K_\infty)$ be the set of extensions of $v$ to $K_\infty$. Then, there are natural bijective correspondences between $\mathcal{E}(v,K_\infty)$, $\Max(A)$ and $\MaxPath(\tree)$, given by
\begin{equation*}
\begin{aligned}
\Max(A)\longleftrightarrow & \mathcal{E}(v,K_\infty)\\
M\longrightarrow & A_M\\
\mathfrak{m}_{v_{\infty}}\cap A\longleftarrow & v_\infty
\end{aligned}
\end{equation*}
and
\begin{equation*}
\begin{aligned}
\mathcal{E}(v,K_\infty) & \longleftrightarrow\MaxPath(\tree)\\
v_\infty & \longrightarrow(v_\infty|_{K_i})_{i\in\insN}\\
\bigcup_iv_i & \longleftarrow(v_i)_{i\in\insN},
\end{aligned}
\end{equation*}
where $\left(\bigcup_iv_i\right)(x)=v_i(x)$ if $x\in K_i$.
\end{proposition}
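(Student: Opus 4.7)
The plan is to verify the two bijections separately, since they rest on different ingredients. The first one ($\Max(A)\leftrightarrow\mathcal{E}(v,K_\infty)$) is the classical correspondence between maximal ideals of an integral closure and extensions of a valuation, while the second ($\mathcal{E}(v,K_\infty)\leftrightarrow\MaxPath(\tree)$) is essentially a bookkeeping check using part \ref{prop:ramiftree-fund:edge} of the previous proposition.

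For $\Max(A)\leftrightarrow\mathcal{E}(v,K_\infty)$, I would argue as follows. Given $M\in\Max(A)$, lying over forces $M\cap V=\mm_V$, and since $V$ is a valuation ring and $A$ is its integral closure in the algebraic extension $K_\infty/K$, the localization $A_M$ is a valuation ring of $K_\infty$ whose contraction to $K$ is $V$; thus its associated valuation $v_\infty$ lies in $\mathcal{E}(v,K_\infty)$. Conversely, given $v_\infty\in\mathcal{E}(v,K_\infty)$ with valuation ring $V_\infty$, one has $A\subseteq V_\infty$ (by integrality of $A$ over $V$), the contraction $P:=\mm_{v_\infty}\cap A$ is prime, and the inclusion $A/P\hookrightarrow V_\infty/\mm_{v_\infty}$ together with the one-dimensionality of $A$ (it is integral over a one-dimensional valuation ring, even when $v$ is not discrete the relevant fibers consist of maximal ideals) shows $P$ is maximal. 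The two assignments invert each other because $A_{\mm_{v_\infty}\cap A}=V_\infty$ (both are valuation rings of $K_\infty$ with the same maximal ideal in $A$) and, conversely, $\mm_{A_M}\cap A=M$.

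For $\mathcal{E}(v,K_\infty)\leftrightarrow\MaxPath(\tree)$, fix $v_\infty\in\mathcal{E}(v,K_\infty)$ and set $v_i:=v_\infty|_{K_i}$. Then each $v_i$ is a vertex of $\tree$, the sequence $v_0<v_1<v_2<\cdots$ is strictly increasing (since $K_{i+1}\supsetneq K_i$ and $v_{i+1}$ is determined by $v_\infty$), and $(v_i,v_{i+1})$ is an edge because by Proposition \ref{prop:ramiftree-fund}\ref{prop:ramiftree-fund:edge} edges of $\tree$ only connect consecutive levels, so no vertex lies strictly between $v_i$ and $v_{i+1}$. Thus $(v_i)_{i\in\insN}$ is an infinite path starting at the root $v$, i.e., a maximal path. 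Conversely, given $(v_i)_i\in\MaxPath(\tree)$, Proposition \ref{prop:ramiftree-fund}\ref{prop:ramiftree-fund:edge} forces $v_i$ to be a valuation on $K_i$ for every $i$, and the $v_i$ form a compatible chain, so the pointwise formula $v_\infty(x):=v_i(x)$ for $x\in K_i$ defines a map $K_\infty\to\Gamma\cup\{\infty\}$ (for a suitable direct-limit group $\Gamma$) that inherits the valuation axioms from the $v_i$; it evidently restricts to $v$ on $K$. The two procedures are mutual inverses essentially by construction.

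The only non-routine point is the classical step identifying $A_M$ with a valuation extension of $V$ and the surjectivity of $M\mapsto A_M$, which I would cite from standard valuation theory (e.g.\ the lying-over and going-up properties for integral extensions of valuation rings, together with the fact that any extension of $V$ to $K_\infty$ dominates the localization of $A$ at its center). Everything else is a direct unwinding of definitions, with Proposition \ref{prop:ramiftree-fund}\ref{prop:ramiftree-fund:edge} carrying the technical weight on the tree side.
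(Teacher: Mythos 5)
Your proposal is correct and follows essentially the same route as the paper: the first bijection is the classical correspondence between maximal ideals of the integral closure and extensions of the valuation (which the paper simply cites from Gilmer, Theorem 20.1, while you sketch the standard domination argument), and the second is the same restriction/union bookkeeping, with your appeal to Proposition \ref{prop:ramiftree-fund}\ref{prop:ramiftree-fund:edge} just making explicit a step the paper leaves implicit. No gaps.
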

\begin{proof}
The bijective correspondence between $\mathcal{E}(v,K_\infty)$ and $\Max(A)$ is a consequence of \cite[Theorem 20.1]{gilmer}.

If $v_\infty\in\mathcal{E}(v,K_\infty)$, then, $v_i:=v_\infty|_{K_i}$ is an extension of $v$, and $v_j$ is an extension of $v_i$ if $j>i$; therefore, $\pi:=(v_i)_{i\in\insN}$ is a maximal path on $\tree$. Conversely, if $\pi=(v_i)_{i\in\insN}$ is a maximal path on $\tree$, then the union $v_\infty:=\bigcup_iv_i$ is well-defined since $v_j$ extends $v_i$ when $j>i$. It is straightforward to see that these two correspondences are inverse one of each other.
\end{proof}

We now concentrate on the main case that is of interest for this paper, namely the case in which $v$ is a discrete valuation. In this case, we have the following.
\begin{proposition}
Preserve the notation above, and suppose that $v$ is discrete. Then, $A$ is an almost Dedekind domain if and only if every (maximal) path is finitely ramified.
\end{proposition}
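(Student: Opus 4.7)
The plan is to translate the question about $A$ into a statement about value groups of extensions of $v$, via Proposition \ref{prop:correspondence}. First, since $A$ is the integral closure of the valuation ring $V$ in an algebraic extension of $K$, it is a one-dimensional Pr\"ufer domain; hence $A$ is almost Dedekind if and only if $A_M$ is a DVR for every $M\in\Max(A)$. By Proposition \ref{prop:correspondence}, each such $M$ corresponds bijectively to a unique maximal path $\pi=(v_i)_{i\in\insN}$ (with $v_0=v$), and the localization $A_M$ is exactly the valuation ring of $v_\infty:=\bigcup_iv_i$. Thus the problem reduces to deciding, for each maximal path $\pi$, whether $v_\infty$ is discrete.

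Next I would compute $\Gamma_{v_\infty}$ directly from its description as a union. Fixing an identification $\Gamma_v\simeq\insZ$ and working inside the common ambient group $\insQ$, the compatibility of the embeddings $\Gamma_{v_i}\hookrightarrow\Gamma_{v_j}$ for $j\geq i$ yields $\Gamma_{v_\infty}=\bigcup_i\Gamma_{v_i}$. By the multiplicativity of ramification indices,
\begin{equation*}
[\Gamma_{v_i}:\Gamma_v]=e(v_i/v)=\prod_{k=0}^{i-1}e(v_{k+1}/v_k)=\prod_{k=0}^{i-1}w(v_k,v_{k+1}),
\end{equation*}
which is exactly the $i$-th partial product in the definition of $w(\pi)$.

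To conclude, I would split into cases. If $w(\pi)<\infty$, then almost every factor $w(v_k,v_{k+1})$ equals $1$, so the ascending chain $(\Gamma_{v_i})_i$ stabilizes; its stable value is a finite-index extension of $\insZ$ inside $\insQ$, hence cyclic, so $v_\infty$ is discrete and $A_M$ is a DVR. Conversely, if $w(\pi)=\infty$, the indices $[\Gamma_{v_i}:\Gamma_v]$ are unbounded, so $\Gamma_{v_\infty}$ contains $\insZ$ with infinite index and therefore cannot be cyclic; hence $v_\infty$ is not discrete and $A_M$ fails to be a DVR. Together, these two implications give the stated equivalence for every maximal path, and since the correspondence $\Max(A)\leftrightarrow\MaxPath(\tree)$ is a bijection, also the equivalence for $A$.

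I do not expect a genuine obstacle: the only fact used beyond Proposition \ref{prop:correspondence} is that the integral closure of a valuation domain in an algebraic extension of its quotient field is Pr\"ufer, which is classical; the rest is the bookkeeping of ramification indices along the path, carefully identifying the value group of the limit valuation with the union of the finite-stage value groups.
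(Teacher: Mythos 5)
Your proof is correct, and it takes a genuinely more self-contained route than the paper. The paper's proof is essentially a two-line reduction: it observes that the ramification index of $M$ over $M\cap V$ equals $w(\pi)$ and then cites \cite[Corollary 3.6]{arnold-gilmer} for the equivalence with $A$ being almost Dedekind. You instead unpack that citation: using the identification $A_M=$ valuation ring of $v_\infty=\bigcup_i v_i$ from Proposition \ref{prop:correspondence}, you compute $\Gamma_{v_\infty}=\bigcup_i\Gamma_{v_i}$ inside the divisible hull $\insQ$ of $\Gamma_v\simeq\insZ$, note that $[\Gamma_{v_i}:\Gamma_v]$ is the $i$-th partial product of $w(\pi)$, and conclude that $\Gamma_{v_\infty}$ is cyclic exactly when $w(\pi)<\infty$ (a cyclic subgroup of $\insQ$ containing $\insZ$ necessarily contains it with finite index). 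The two arguments hinge on the same identification of the ramification of $M/V$ with the weight of the path; what your version buys is independence from the external reference, at the cost of a little bookkeeping that the paper delegates to Arnold--Gilmer's more general result on unions of Pr\"ufer domains. All the individual steps you use (Pr\"ufer-ness of the integral closure of a valuation ring, $A_M$ being the valuation ring of the limit valuation, multiplicativity of ramification indices) are either classical or already established in Section \ref{sect:correspond}, so there is no gap.
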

\begin{proof}
If $\pi$ is the maximal path corresponding to a maximal ideal $M$ of $A$, then the ramification index of $M$ over $M\cap V$ is, by construction, equal to the ramification index of $\pi$. The result now follows from \cite[Corollary 3.6]{arnold-gilmer}.
\end{proof}

Let now $v'\in\tree(v,\KK)$. By what we have seen, we can associate to $v'$ a maximal ideal $M$ of some $A_i$; thus, we can also associate to $v'$ a finitely generated ideal of $A$, namely $MA$, and we have a natural map
\begin{equation*}
\begin{aligned}
\Psi\colon \tree(v,\KK)  & \longrightarrow \mathcal{I}_f(A),\\
v' & \longmapsto MA
\end{aligned}
\end{equation*}
where $\mathcal{I}_f(A)$ is the set of finitely generated ideals of $A$. The map $\Psi$ is in general not surjective (for example, a necessary condition for $I$ to be in the image is that $I\cap A_i$ is radical for some $i$); however, if $I$ is a finitely generated ideal of $A$, we can find an index $i$ such that $K_i$ contains all elements of a family of generators of $I$; then, $I=JA$ is the extension of $J$, an ideal of the Dedekind domain $A_i$, and thus $J$ is a product of ideals of $A_i$ associated to elements of $\tree$. In particular, $I$ can be written as a product of ideals in the form $\Psi(v_i)$, with $v_i\in\tree$.

So far, we have started from a chain of extensions and constructed a tree from it. However, under mild hypothesis we can actually go back, and construct a sequence of extensions from a tree. We give two such constructions, one with a stronger hypothesis on the tree and one with a stronger hypothesis on the valuation.
\begin{proposition}\label{prop:tree->val:balanced}
Let $\tree$ be a balanced tree, and let $v$ be a discrete valuation on $K$. Then, there is a sequence $\KK$ of finite separable extensions of $K$ such that $\tree\simeq\tree(v,\KK)$.
\end{proposition}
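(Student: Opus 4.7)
The plan is to build the chain $\KK$ one level at a time, maintaining at each stage a weight-preserving bijection between the vertices of $\tree$ at height $n$ and the extensions of $v$ to $K_n$. The engine is Theorem \ref{teor:extension-valuations}, and the role of the balanced hypothesis will be to supply the uniform-sum condition $\sum_j e_{ij}f_{ij}=n$ required by that theorem. A preliminary observation, which I would record first, is that balancedness forces each level to be finite: $\tree(0)=\{r\}$, and since $w(r)=w_0\in\insN$ while every edge carries weight at least one, $r$ has at most $w_0$ children; iterating gives $|\tree(n+1)|\leq|\tree(n)|\cdot w_n<\infty$, where $w_n$ is the common outbound weight at level $n$. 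This is essential because Theorem \ref{teor:extension-valuations} accepts only finitely many valuations at a time.

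Set $K_0:=K$, and assume inductively that $K_n$ has been constructed together with a weight-preserving identification $\Phi_n$ of $\bigcup_{i\leq n}\tree(i)$ with the ramification tree of $v$ relative to the finite chain $K_0\subsetneq\cdots\subsetneq K_n$. Enumerate $\tree(n)=\{a_1,\ldots,a_k\}$, put $v_i:=\Phi_n(a_i)$, list the children of $a_i$ as $b_{i1},\ldots,b_{i,t_i}$, and set $e_{ij}:=w(a_i,b_{ij})$. Since $\tree$ is balanced, $\sum_j e_{ij}=w(a_i)=w_n$ is independent of $i$. Taking $f_{ij}:=1$ and $L_{ij}$ equal to the residue field of $v_i$ itself (so the residue-field hypothesis is automatic), Theorem \ref{teor:extension-valuations} produces a finite separable extension $K_{n+1}/K_n$ of degree $w_n$ in which each $v_i$ has exactly $t_i$ extensions, with prescribed ramification indices $e_{ij}$; sending each $b_{ij}$ to the corresponding extension defines $\Phi_{n+1}$ and closes the induction.

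Setting $\KK:=(K_n)_{n\in\insN}$, the maps $\Phi_n$ assemble into an isomorphism of weighted trees $\tree\simeq\tree(v,\KK)$. The only genuine obstacle in the argument is verifying the uniform-sum hypothesis of Theorem \ref{teor:extension-valuations}; balancedness is exactly what is needed, and once this is in place the rest is bookkeeping. It is worth noting that the choice $f_{ij}=1$ is in no way forced — any inertial degrees compatible with the uniform-sum condition would serve equally well — so the sequence $\KK$ is far from unique, but this ambiguity is invisible at the level of the weighted tree, which records only ramification data.
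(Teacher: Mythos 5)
Your proof is correct and follows essentially the same route as the paper: an inductive application of Theorem \ref{teor:extension-valuations} level by level, with $f_{ij}=1$ (trivial residue extensions) and the balanced hypothesis supplying the uniform degree $\sum_j e_{ij}=w_n$. Your preliminary observation that balancedness forces each level to be finite is a point the paper leaves implicit, and it is indeed needed to invoke the extension theorem, which handles only finitely many valuations at a time.
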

\begin{proof}
We construct the fields $K_i$ by induction. If $i=0$ we take $K=K_0$. Suppose that we have constructed extensions to $K_i$ such that the ramification tree of the chain $K_0\subseteq K_1\subseteq\cdots\subseteq K_i$ coincide with the first $i$ levels of $\tree$. Let $v_1,\ldots,v_k$ be the extensions of $v$ to $K_i$, corresponding respectively to $a_1,\ldots,a_k\in\tree$. For each $i$, let $(a_i,b_{i,1}),\ldots,(a_i,b_{i,t_i})$ be the edges starting from $a_i$. By the extension theorem (Theorem \ref{teor:extension-valuations}), since the tree is balanced, we can find an extension $K_{i+1}$ of $K_i$ of degree $w(a_1)=\cdots=w(a_k)$ such that each $v_i$ has $t_i$ extensions of ramification degree $w(a_i,b_{i,1}),\ldots,w(a_i,b_{i,t_i})$ and such that every extension of residue fields is trivial. The ramification tree of $K_0\subseteq K_1\subseteq\cdots\subseteq K_{i+1}$ now is isomorphic to the first $i+1$ levels of $\tree$, and the claim follows by induction.
\end{proof}

\begin{proposition}\label{prop:tree->val:locbound}
Let $\tree$ be a locallly bounded tree, and let $v$ be a discrete valuation on $K$ with finite residue field. Then, there is a sequence $\KK$ of finite separable extensions of $K$ such that $\tree\simeq\tree(v,\KK)$.
\end{proposition}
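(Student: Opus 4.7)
The strategy parallels Proposition \ref{prop:tree->val:balanced}: I would construct the tower $K_0 \subsetneq K_1 \subsetneq \cdots$ level by level, applying Theorem \ref{teor:extension-valuations} at each stage to realize the next layer of ramification. The new difficulty is that the weights $w(a)$ are no longer constant among the vertices of $\tree(i)$, so one cannot take $[K_{i+1}:K_i]$ to equal a common weight. Instead, the plan is to choose a uniform $n$ as the degree of $K_{i+1}/K_i$ and to compensate for the missing mass in the fundamental equality by prescribing nontrivial inertial degrees. This is exactly where the finite-residue-field hypothesis is needed: the residue field of every extension of $v$ inside any finite extension of $K$ is itself finite, and hence admits extensions of every positive integer degree, which are the fields $L_{ij}$ required by Theorem \ref{teor:extension-valuations}.

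I would first observe that local boundedness forces each $\tree(i)$ to be finite: by induction on $i$, if $\tree(i)$ is finite and all its vertices have weight at most some $N_i$, then each such vertex has at most $N_i$ children, so $|\tree(i+1)| \le N_i \cdot |\tree(i)| < \infty$. Hence the inductive step involves only finitely many valuations $v_1, \ldots, v_k$ extending $v$ to $K_i$, corresponding to vertices $a_1, \ldots, a_k \in \tree(i)$. Let the outgoing edges from $a_j$ have weights $e_{j,1}, \ldots, e_{j,t_j}$ and set $d_j := \gcd_l e_{j,l}$. To invoke Theorem \ref{teor:extension-valuations}, I must select a single integer $n$ together with inertial degrees $f_{j,l} \ge 1$ satisfying $\sum_l e_{j,l} f_{j,l} = n$ for every $j$.

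Substituting $g_{j,l} := f_{j,l} - 1 \ge 0$ reduces the requirement to representing $n - w(a_j)$ simultaneously as a non-negative integer combination of $e_{j,1}, \ldots, e_{j,t_j}$ for each $j$. By Schur's theorem on numerical semigroups, for each $j$ every sufficiently large multiple of $d_j$ is so representable; consequently, taking $n$ to be any sufficiently large common multiple of $d_1, \ldots, d_k$ (for instance, a large enough multiple of $\mathrm{lcm}(d_1, \ldots, d_k)$) produces a simultaneous solution. Feeding these data into Theorem \ref{teor:extension-valuations} yields a finite separable extension $K_{i+1}/K_i$ of degree $n$ in which each $v_j$ has precisely $t_j$ extensions with the prescribed ramification degrees. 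The ramification tree of $K_0 \subseteq \cdots \subseteq K_{i+1}$ then matches $\tree$ through level $i+1$, and induction finishes the construction. The only genuine obstacle is coordinating a common $n$ across the $v_j$, which is precisely what the numerical-semigroup argument resolves; every other ingredient is a direct translation of the balanced case.
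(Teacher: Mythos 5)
Your proof is correct and follows the same overall strategy as the paper: induct on levels, observe that local boundedness makes each level of $\tree$ finite (so only finitely many valuations $v_1,\ldots,v_k$ need to be controlled at each stage), and invoke Theorem \ref{teor:extension-valuations}, with the finite residue fields supplying the required residue extensions of every degree. The only point of divergence is how the common degree $n=[K_{i+1}:K_i]$ and the inertial degrees are chosen. The paper does this more directly: it sets $\delta:=\mathrm{lcm}(w(a_1),\ldots,w(a_k))$ and assigns to \emph{every} edge out of $a_j$ the same inertial degree $f=\delta/w(a_j)$, so that $\sum_l e_{j,l}f=(\delta/w(a_j))\sum_l e_{j,l}=\delta$ holds identically and no representation theorem is needed. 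Your numerical-semigroup route is also valid --- note that $w(a_j)=\sum_l e_{j,l}$ is automatically a multiple of $d_j$, so the congruence you need, $n\equiv w(a_j)\pmod{d_j}$, reduces to $d_j\mid n$, and Schur's theorem then applies for $n$ a sufficiently large multiple of $\mathrm{lcm}(d_1,\ldots,d_k)$ --- but it buys nothing over the uniform choice: the ``only genuine obstacle'' you identify dissolves once one notices that the inertial degrees on the edges out of a given vertex need not be distinct.
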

\begin{proof}
We construct the fields $K_i$ by induction. If $i=0$ we take $K=K_0$. Suppose that we have constructed extensions to $K_i$ such that the ramification tree of the chain $K_0\subseteq K_1\subseteq\cdots\subseteq K_i$ coincide with the first $i$ levels of $\tree$. Let $v_1,\ldots,v_k$ be the extensions of $v$ to $K_i$, corresponding respectively to $a_1,\ldots,a_k\in\tree$. Note that each $v_i$ is a finite extension of $v$, and thus the residue field of $v_i$ is finite: in particular, the residue field has finite extensions of any degree. Let $\delta$ be the least common multiple of $w(a_1),\ldots,w(a_k)$. For each $i$, let $(a_i,b_{i,1}),\ldots,(a_i,b_{i,t_i})$ be the edges starting from $a_i$. By Theorem \ref{teor:extension-valuations}, we can find an extension $K_{i+1}$ of $K_i$ of degree $\delta$ such that each $v_i$ has $t_i$ extensions, say $v_{i,1},\ldots,v_{i,t_i}$, such that the extension $v_i\subseteq v_{i,r}$ has ramification degree $e_r:=w(a_i,b_{i,r})$ and inertial degree $f_r:=\delta/w(a_i)$: this is possible since
\begin{equation*}
\sum_{r=1}^{t_i}e_rf_r=\sum_{r=1}^{t_i}w(a_i,b_{i,r})\frac{\delta}{w(a_i)}=w(a_i)\cdot\frac{\delta}{w(a_i)}=\delta
\end{equation*}
for every $i$.

The ramification tree of $K_0\subseteq K_1\subseteq\cdots\subseteq K_{i+1}$ now is isomorphic to the first $i+1$ levels of $\tree$, and the claim follows by induction.
\end{proof}

\section{Criticality}
In this section, we see how we can use the ramification tree to detect the SP-height of a prime ideal. We begin with a purely ring-theoretic characterization. 
\begin{proposition}\label{prop:caratt-Critalpha}
Let $A$ be an almost Dedekind domain, and let $M\in\Max(A)$. Then, $M\in\Crit^\alpha(A)$ if and only if for every finitely generated ideal $I\subseteq M$ and every $\beta<\alpha$ there is a maximal ideal $N\in\Crit^\beta(A)$ such that $\nu_I(N)\geq 2$.
\end{proposition}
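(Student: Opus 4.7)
My plan is transfinite induction on $\alpha$. The base case $\alpha=0$ is immediate: $\Crit^0(A)=\Max(A)\ni M$, and the right-hand condition is vacuous. The limit case is equally mechanical, since both $\Crit^\alpha(A)=\bigcap_{\beta<\alpha}\Crit^\beta(A)$ and the right-hand condition amount to universal quantifications over $\beta<\alpha$, so the statement at $\alpha$ is just the conjunction over $\beta<\alpha$ of the statements at $\beta$, which hold by induction.

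The content lies in the successor case $\alpha=\gamma+1$. Unpacking the definition, $M\in\Crit^{\gamma+1}(A)$ means $MT_\gamma\in\Crit(T_\gamma)$, which in particular forces $MT_\gamma$ to be a proper maximal ideal of $T_\gamma$, whence $M\in\Crit^\gamma(A)$ via the correspondence between $\Max(T_\gamma)$ and $\Crit^\gamma(A)$. For the forward direction, given a finitely generated ideal $I\subseteq M$, the extended ideal $IT_\gamma\subseteq MT_\gamma$ is still finitely generated; criticality of $MT_\gamma$ yields $Q\in\Max(T_\gamma)$ with $\nu_{IT_\gamma}(Q)\geq 2$. Writing $Q=NT_\gamma$ with $N\in\Crit^\gamma(A)$, the ring $A_N$ is the unique valuation overring of $T_\gamma$ dominating $Q$, so $(T_\gamma)_Q=A_N$ and the two associated valuations coincide; thus $\nu_I(N)\geq 2$. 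The same $N$ serves for every $\beta<\gamma$ because $\Crit^\gamma(A)\subseteq\Crit^\beta(A)$.

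The reverse direction of the successor step is the most delicate point. The hypothesis restricted to $\beta<\gamma$ combined with the inductive hypothesis first yields $M\in\Crit^\gamma(A)$, so $MT_\gamma$ is maximal in $T_\gamma$. To verify it is critical, take a finitely generated ideal $J=(y_1,\ldots,y_n)T_\gamma\subseteq MT_\gamma$; since $MT_\gamma$ is $T_\gamma$-generated by $M$, each $y_i$ admits a finite expansion $y_i=\sum_j m_{ij}t_{ij}$ with $m_{ij}\in M$ and $t_{ij}\in T_\gamma$. Setting $I:=(m_{ij})_{i,j}$, a finitely generated ideal of $A$ contained in $M$, the hypothesis at $\beta=\gamma$ produces $N\in\Crit^\gamma(A)$ with $v_N(m_{ij})\geq 2$ for all $i,j$. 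Taking $Q=NT_\gamma$ and identifying $v_Q$ with $v_N$ via $(T_\gamma)_Q=A_N$, the ultrametric inequality gives $v_Q(y_i)\geq\min_j\bigl(v_Q(m_{ij})+v_Q(t_{ij})\bigr)\geq 2$ for every $i$, hence $\nu_J(Q)\geq 2$. The main obstacle here is precisely this transfer: to invoke the $A$-side hypothesis one must first replace an arbitrary finitely generated $T_\gamma$-ideal inside $MT_\gamma$ by a finitely generated $A$-ideal inside $M$, which the decomposition into elements of $M$ achieves cleanly, after which everything reduces to the identification $v_{NT_\gamma}=v_N$.
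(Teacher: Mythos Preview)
Your proof is correct, but the paper proceeds without transfinite induction. For the forward direction the two arguments are close: both pass to an overring $T_\beta$ and use criticality of $MT_\beta$ to locate the required $N$ (the paper does this for arbitrary $\beta<\alpha$ directly, whereas you handle only $\beta=\gamma$ in the successor step and then descend via $\Crit^\gamma(A)\subseteq\Crit^\beta(A)$). For the converse the paper argues by contrapositive: if $M\notin\Crit^\alpha(A)$ and $\gamma+1$ is the SP-height of $M$ (necessarily a successor, and at most $\alpha$), then $MT_\gamma$ is a non-critical maximal ideal of $T_\gamma$, so it contains a finitely generated radical ideal $J=IT_\gamma$ for some finitely generated $I\subseteq M$; then $\nu_I(N)\leq 1$ for every $N\in\Crit^\gamma(A)$, and the right-hand condition fails at $\beta=\gamma<\alpha$. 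This is shorter and sidesteps both the induction and your decomposition $y_i=\sum_j m_{ij}t_{ij}$. Your approach, by contrast, is more self-contained---it does not invoke the SP-height---and makes the passage from finitely generated $T_\gamma$-ideals inside $MT_\gamma$ to finitely generated $A$-ideals inside $M$ completely explicit; the paper relies instead on the (equally valid but less detailed) observation that any finitely generated ideal of $T_\gamma$ is extended from a finitely generated ideal of $A$.
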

\begin{proof}
Let $\{T_\alpha\}$ be the SP-derived sequence of $A$.

Suppose $M\in\Crit^\alpha(A)$, let $I\subseteq M$ be finitely generated and let $\beta<\alpha$. Then, $IT_\beta\neq T_\beta$ since $MT_\beta\neq T_\beta$. If $I$ is radical, then $MT_\beta$ is a non-critical maximal ideal of $T_\beta$, and thus $M\notin\Crit^{\beta+1}(A)\supseteq\Crit^\alpha(A)$, a contradiction. Hence $I$ is not radical, i.e., there is an $N'\in\Max(T_\beta)$ such that $\nu_{IT_\beta}(N')\geq 2$. By construction, $N:=N'\cap A\in\Crit^\beta(A)$ and $\nu_I(N)=\nu_{IT_\beta}(N')\geq 2$, and we are done.

Suppose that $M\notin\Crit^\alpha(A)$, and let $\beta$ be the SP-height of $M$. Then, $MT_\beta$ is not a critical ideal, and thus it contains a finitely generated radical ideal $J$; let $I$ be a finitely generated ideal of $A$ such that $J=IT_\beta$. If $N\in\Crit^\beta(A)$, then $\nu_I(N)=\nu_J(NT_\beta)=1$, and thus the condition of the statement does not hold.
\end{proof}

The following lemma represents the translation from ideal functions to the ramification tree.
\begin{lemma}\label{lemma:vNA}
Let $V,\KK,K_\infty,A_i,A$ as in Section \ref{sect:correspond}; suppose that $V$ is a DVR and $A$ is almost Dedekind. Let $M\in\Max(A)$ and let $N:=M\cap A_i$; let $v_i$ be the element of $\tree$ corresponding to $N$. Let $\pi$ be the path corresponding to $M$ in $\tree(v,\KK)$ and let $\pi'$ be the infinite subpath of $\pi$ starting from $v_i$. Then,
\begin{equation*}
\nu_{NA}(M)=w(\pi').
\end{equation*}
\end{lemma}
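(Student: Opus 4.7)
The plan is to unwind $\nu_{NA}(M)$ into the value of $v_M$ on a convenient uniformizer, and then to identify that value with the weight $w(\pi')$ via multiplicativity of ramification indices along the path $\pi'$.

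First, I would recall that, by definition, $\nu_{NA}(M)$ is the value of the ideal $NA\cdot A_M$ under the normalized valuation $v_M$ attached to the DVR $A_M$. Under the bijection of Proposition \ref{prop:correspondence}, $v_M$ corresponds to the path $\pi$, and therefore $v_M = v_\infty := \bigcup_j v_j$ where $(v_j)_{j\in\insN} = \pi$; the subpath $\pi'$ is just $(v_j)_{j\geq i}$. Since $(A_i)_N \subseteq A_M$ and $A_i$ is Dedekind, $N$ localizes to a principal ideal $(\pi_N)$ in the DVR $(A_i)_N$, so after extending to $A$ we obtain
\begin{equation*}
NA\cdot A_M \;=\; N A_M \;=\; \pi_N A_M.
\end{equation*}
Thus $\nu_{NA}(M) = v_M(\pi_N)$, and the problem reduces to comparing the normalizations of $v_i$ and $v_M = v_\infty$ on the element $\pi_N \in K_i$.

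Next, I would invoke the definition of the ramification index: the inclusion of value groups $\Gamma_{v_i}\hookrightarrow\Gamma_{v_\infty}$, once both are identified with $\insZ$, is multiplication by $e(v_\infty/v_i)$. Since $v_i(\pi_N)=1$, this yields $v_M(\pi_N) = e(v_\infty/v_i)$. Finally, multiplicativity of ramification indices along the finite truncations of $\pi'$ gives, for every $n \geq i$,
\begin{equation*}
e(v_n/v_i) \;=\; \prod_{j=i}^{n-1} e(v_{j+1}/v_j) \;=\; \prod_{j=i}^{n-1} w(v_j,v_{j+1}).
\end{equation*}
Since $A$ is almost Dedekind, the previous proposition ensures that $\pi$ (hence $\pi'$) is finitely ramified, so $w(v_j,v_{j+1})=1$ for all sufficiently large $j$ and both the weight $w(\pi') = \prod_{j\geq i} w(v_j,v_{j+1})$ and the value $e(v_\infty/v_i)$ agree with this stabilized finite product. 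Putting the chain of equalities together yields $\nu_{NA}(M) = w(\pi')$.

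The only delicate point is the passage from the finite extensions $v_i\subseteq v_n$ (where ramification is unambiguously defined by the fundamental equality and is multiplicative) to the infinite extension $v_i\subseteq v_\infty$; this is what the finite-ramification hypothesis is for, since it guarantees that $v_\infty$ is itself discrete with value group $\insZ$ and that the ramification index stabilizes at the finite value $w(\pi')$. Everything else is a straightforward bookkeeping of localizations and uniformizers.
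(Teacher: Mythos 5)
Your proof is correct and follows essentially the same route as the paper's: both reduce $\nu_{NA}(M)$ to the ramification index of $M$ over $N$ and then compute that index as the (stabilizing, by finite ramification) product of the edge weights along $\pi'$ via multiplicativity. Your version merely spells out the uniformizer bookkeeping and the value-group normalization that the paper leaves implicit.
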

\begin{proof}
Let $N_j:=M\cap A_{i+j}$; then, $N=N_0$ and $N_0,N_1,\ldots$ is the sequence of prime ideals corresponding to $\pi'$. By definition of the ramification index, $\nu_{N_jA}(N_{j+1})=e(N_{j+1}/N_j)=w(N_j,N_{j+1})$; since $M$ is the union of the $N_j$ and the ramification indices multiply, we obtain that
\begin{equation*}
\nu_{NA}(M)=\prod_{j\geq 0}e(N_{j+1}/N_j)=\prod_{j\geq 0}w(N_j,N_{j+1})=w(\pi'),
\end{equation*}
as claimed.
\end{proof}

\begin{proposition}\label{prop:critical-tree}
Let $V,\KK,K_\infty,A_i,A$ as in Section \ref{sect:correspond}; suppose that $V$ is a DVR and $A$ is almost Dedekind. Let $M$ be a maximal ideal of $A$ and let $\pi$ be the corresponding maximal path in $\tree(v,\KK)$. Then, the following are equivalent:
\begin{enumerate}[(i)]
\item\label{prop:critical-tree:crit} $M\in\Crit(A)$;
\item\label{prop:critical-tree:upi} for all $v_i\in u(\pi)$ there is a path $\pi'$ starting from $v_i$ such that $w(\pi')>1$;
\item\label{prop:critical-tree:tail} there is an unramified subpath $\pi_0$ of $\pi$ such that, for all $v_i\in\pi_0$, there is a path $\pi'$ starting from $v_i$ with $w(\pi')>1$.
\end{enumerate} 
\end{proposition}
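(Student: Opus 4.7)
The plan is to establish the cyclic implications \ref{prop:critical-tree:upi}$\Rightarrow$\ref{prop:critical-tree:tail}$\Rightarrow$\ref{prop:critical-tree:crit}$\Rightarrow$\ref{prop:critical-tree:upi}, with Lemma \ref{lemma:vNA} serving as the two-way dictionary between the ideal function $\nu_{NA}$ on $\Max(A)$ and the weight of the corresponding infinite subpath of $\tree(v,\KK)$. The implication \ref{prop:critical-tree:upi}$\Rightarrow$\ref{prop:critical-tree:tail} is immediate on taking $\pi_0:=u(\pi)$, which is an infinite unramified subpath of $\pi$ by definition.

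For \ref{prop:critical-tree:crit}$\Rightarrow$\ref{prop:critical-tree:upi}, I would fix $v_i\in u(\pi)$ and let $N$ be the maximal ideal of $A_i$ corresponding to $v_i$. Since $v_i$ is the restriction to $K_i$ of the valuation attached to $M$, we have $N=M\cap A_i$, so $NA$ is a finitely generated ideal of $A$ contained in $M$. Criticality of $M$ then yields some $M^*\in\Max(A)$ with $\nu_{NA}(M^*)\geq 2$; in particular $N\subseteq M^*$, and because $N$ is maximal in the Dedekind domain $A_i$ this forces $M^*\cap A_i=N$. Hence the maximal path $\pi^*$ corresponding to $M^*$ passes through $v_i$, and Lemma \ref{lemma:vNA} applied to the infinite subpath $\pi'$ of $\pi^*$ starting at $v_i$ delivers $w(\pi')=\nu_{NA}(M^*)\geq 2$, as desired.

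For \ref{prop:critical-tree:tail}$\Rightarrow$\ref{prop:critical-tree:crit}, I would fix $\pi_0$ as in the hypothesis and take an arbitrary finitely generated ideal $I\subseteq M$. Choose an index $i$ large enough that $v_i:=\pi(i)\in\pi_0$ \emph{and} $I=JA$ for some finitely generated ideal $J$ of $A_i$; the second condition is possible because the finitely many generators of $I$ all lie in some $K_i$, hence in $A_i$. Setting $N:=M\cap A_i$, one has $J\subseteq N$, hence $JA\subseteq NA$. By hypothesis there is an infinite path $\pi'$ starting at $v_i$ with $w(\pi')>1$; since $\tree$ is an $\omega$-tree, $\pi'$ extends uniquely to a maximal path of $\tree$ and thus corresponds to some $M^*\in\Max(A)$ with $\nu_I(M^*)\geq\nu_{NA}(M^*)=w(\pi')\geq 2$ by Lemma \ref{lemma:vNA}. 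Since $I$ was an arbitrary finitely generated ideal of $M$, it follows that $M\in\Crit(A)$.

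The main (mild) technical point is the descent-and-extension in the last implication: simultaneously choosing $i$ large enough to absorb the finitely many generators of $I$ and to sit inside the prescribed unramified tail $\pi_0$, and then extending the given branching path $\pi'$ to a maximal path so as to produce an actual maximal ideal of $A$. Once these moves are available, Lemma \ref{lemma:vNA} does all the real work in both nontrivial directions.
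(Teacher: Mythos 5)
Your proof is correct and follows essentially the same route as the paper: Lemma \ref{lemma:vNA} is the key dictionary in both nontrivial implications, the step \ref{prop:critical-tree:upi}$\Rightarrow$\ref{prop:critical-tree:tail} is the same trivial choice $\pi_0=u(\pi)$, and your direct argument for \ref{prop:critical-tree:tail}$\Rightarrow$\ref{prop:critical-tree:crit} is just the contrapositive-free form of the paper's contradiction argument (the paper picks a radical $I\subseteq M$ and derives $\nu_I(M')\geq 2$ for a suitable $M'$, which is the same computation). No gaps.
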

\begin{proof}
\ref{prop:critical-tree:crit} $\Longrightarrow$ \ref{prop:critical-tree:upi} Suppose $M\in\Crit(A)$ and let $v_i\in u(\pi)$. Then, $v_i$ corresponds to a maximal ideal $N$ of $A_i$, the integral closure of $V$ in $K_i$; let $I:=NA$. Then, $I$ is finitely generated; if $\pi_0$ is the subpath of $\pi$ starting from $N$, then $\nu_I(M)=w(\pi_0)=1$ because $\pi_0$ is a subpath of $u(\pi)$, which is unramified. Since $M$ is critical, $I$ is not radical, and thus there is a maximal ideal $M'$ of $A$ such that $\nu_I(M')\geq 2$; by construction, we must have $M'\cap A_i=N$. Let $\pi'$ be the subpath of the path corresponding to $M'$ that starts from $N$. Then, $w(\pi')=\nu_I(M')\geq 2>1$, as claimed.

\ref{prop:critical-tree:upi} $\Longrightarrow$ \ref{prop:critical-tree:tail} is obvious (just take $\pi_0=u(\pi)$).

\ref{prop:critical-tree:tail} $\Longrightarrow$ \ref{prop:critical-tree:crit} Suppose that $M$ is not critical, let $I=(x_1,\ldots,x_k)A$ be a finitely generated radical ideal contained in $M$, and fix $v_1\in\pi_0$ of level $\lambda$. Let $t$ be an integer such that $t\geq\lambda$ and such that $x_1,\ldots,x_k\in A_t$: then, there is a prime ideal $N$ of $A_t$ such that $(x_1,\ldots,x_k)A_t\subseteq N\subseteq M$. Let $v_2$ be the vertex of $\tree$ associated to $N$; then, $v_2\in\pi$ and $v_2>v_1$, so that $v_2\in\pi_0$. Applying the hypothesis on $v_2$, we can find a path $\pi'$ starting from $v_2$ such that $w(\pi')>1$. Let $M'$ be the maximal ideal of $A$ corresponding to (the maximal extension of) $\pi'$: then, $I\subseteq N\subseteq M'$, and thus since $I$ is radical we must have $\nu_I(M')=1$. However, using Lemma \ref{lemma:vNA}, $\nu_I(M')\geq\nu_{NA}(M')=w(\pi')\geq 2$, a contradiction. Therefore $M$ must be critical, as claimed.
\end{proof}

Let now $\tree$ be a tree such that all its paths are finitely ramified. We define recursively:
\begin{itemize}
\item $\Crit^0(\tree):=\MaxPath(\tree)$;
\item if $\alpha=\beta+1$, then $\Crit^\alpha(\tree):=\{\pi\in\MaxPath(\tree)\mid \forall v\in u(\pi)$ there is a path $\pi'$ starting from $v$ with $w(\pi')>1$ and such that the maximal extension of $\pi'$ is in $\Crit^\beta(\tree)\}$.
\item if $\alpha$ is a limit ordinal, then $\Crit^\alpha(\tree):=\bigcap\{\Crit^\beta(\tree)\mid \beta<\alpha\}$.
\end{itemize}
We define the \emph{SP-height} $\sph(\pi)$ of $\pi\in\MaxPath(\tree)$ as the smallest ordinal $\beta$ such that $\pi\notin\Crit^\beta(\tree)$, and the \emph{SP-rank} $\spr(\tree)$ of $\tree$ as the smallest ordinal $\beta$ such that $\Crit^\beta(\tree)=\emptyset$.

\begin{lemma}
Preserve the notation above. The SP-rank of $\tree$ is the supremum of the SP-heights of the maximal paths of $\tree$.
\end{lemma}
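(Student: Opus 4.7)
The plan is to mirror the standard argument for almost Dedekind domains recalled in the Preliminaries. The central ingredient is the monotonicity of the sequence $\{\Crit^\beta(\tree)\}_\beta$ as a descending chain of subsets of $\MaxPath(\tree)$: if $\beta\leq\beta'$, then $\Crit^{\beta'}(\tree)\subseteq\Crit^\beta(\tree)$. I would verify this by transfinite induction on $\beta'$: the successor step follows because the condition defining $\Crit^{\beta+1}$ requires the maximal extension of $\pi'$ to lie in $\Crit^\beta$, which shrinks as $\beta$ grows; the limit step is immediate from the definition as an intersection.

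From monotonicity one extracts a convenient reformulation of the SP-height: $\sph(\pi)\leq\beta$ if and only if $\pi\notin\Crit^\beta(\tree)$. Indeed, if $\sph(\pi)=\beta_0\leq\beta$, then $\pi\notin\Crit^{\beta_0}(\tree)\supseteq\Crit^\beta(\tree)$, so $\pi\notin\Crit^\beta(\tree)$; conversely, $\pi\notin\Crit^\beta(\tree)$ forces $\sph(\pi)\leq\beta$ directly from the definition.

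With this in hand, denote $\alpha:=\spr(\tree)$ and $\gamma:=\sup\{\sph(\pi):\pi\in\MaxPath(\tree)\}$. The inequality $\gamma\leq\alpha$ is immediate: for every $\pi$ we have $\pi\notin\emptyset=\Crit^\alpha(\tree)$, hence $\sph(\pi)\leq\alpha$, and taking the supremum yields $\gamma\leq\alpha$. For the reverse inequality $\alpha\leq\gamma$, every $\pi$ satisfies $\sph(\pi)\leq\gamma$; by the reformulation of the previous paragraph, this gives $\pi\notin\Crit^\gamma(\tree)$ for every $\pi\in\MaxPath(\tree)$, so $\Crit^\gamma(\tree)=\emptyset$, and thus $\spr(\tree)\leq\gamma$.

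No serious obstacle is expected; the only point to keep in mind is that one is implicitly assuming $\spr(\tree)$ exists as an ordinal, i.e., that the descending chain $\{\Crit^\beta(\tree)\}_\beta$ actually reaches $\emptyset$. Since it is a descending chain of subsets of the set $\MaxPath(\tree)$, it must stabilize at some ordinal; if the stable value is empty, both $\spr(\tree)$ and all $\sph(\pi)$ are honest ordinals and the argument above applies verbatim. Otherwise, one should interpret the equality in the extended sense (both sides equal to $\infty$, witnessed by any $\pi$ lying in every $\Crit^\beta(\tree)$).
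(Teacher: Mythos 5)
Your proof is correct and is exactly the argument the paper has in mind: the paper's own proof is just ``It follows directly from the definitions,'' and your write-up (monotonicity of the chain $\{\Crit^\beta(\tree)\}_\beta$ by transfinite induction, the reformulation $\sph(\pi)\leq\beta\iff\pi\notin\Crit^\beta(\tree)$, and the two resulting inequalities) is the standard expansion of that remark. Your closing caveat about the chain needing to reach $\emptyset$ is a reasonable observation, consistent with the paper's implicit reliance on the analogous existence result for the ring-theoretic SP-rank.
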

\begin{proof}
It follows directly from the definitions.
\end{proof}

Putting together the two propositions above we get:
\begin{theorem}
Preserve the notation above. Let $M\in\Max(A)$ and let $\pi$ be the corresponding maximal path in $\tree(v,\KK)$. Then, $M\in\Crit^\alpha(A)$ if and only if $\pi\in\Crit^\alpha(\tree(v,\KK))$.
\end{theorem}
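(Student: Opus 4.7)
The plan is a transfinite induction on $\alpha$. The case $\alpha=0$ is immediate, since both sides reduce to membership in $\Max(A)$ and $\MaxPath(\tree(v,\KK))$, which correspond under Proposition \ref{prop:correspondence}. At a limit ordinal, both $\Crit^\alpha(A)$ and $\Crit^\alpha(\tree)$ are defined as intersections over $\gamma<\alpha$, so the equivalence transfers directly from the inductive hypothesis.

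The work lies in the successor step $\alpha=\beta+1$; the argument extends Proposition \ref{prop:critical-tree} transfinitely, using Proposition \ref{prop:caratt-Critalpha} on the ring side and Lemma \ref{lemma:vNA} to translate ramification indices into path weights. For the forward implication, suppose $M\in\Crit^\alpha(A)$ and fix any $v\in u(\pi)$, with $N=M\cap A_i$ the corresponding maximal ideal of $A_i$. The finitely generated ideal $I:=NA$ is contained in $M$, so Proposition \ref{prop:caratt-Critalpha} produces some $M'\in\Crit^\beta(A)$ with $\nu_I(M')\geq 2$. Maximality of $N$ forces $M'\cap A_i=N$, so the path $\pi^*$ corresponding to $M'$ passes through $v$; letting $\pi'$ be the subpath of $\pi^*$ starting at $v$, Lemma \ref{lemma:vNA} gives $w(\pi')=\nu_I(M')\geq 2$, while the inductive hypothesis yields $\pi^*\in\Crit^\beta(\tree)$. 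This is precisely the defining condition for $\pi\in\Crit^\alpha(\tree)$.

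For the reverse implication, assume $\pi\in\Crit^\alpha(\tree)$, fix a finitely generated $I\subseteq M$, and choose $i$ large enough that $I$ is generated by elements of $A_i$ and $v_i\in u(\pi)$; set $N:=M\cap A_i$, so that $I\subseteq NA$. Applying the definition of $\Crit^\alpha(\tree)$ at $v_i$ yields a path $\pi'$ starting there with $w(\pi')\geq 2$ whose maximal extension $\pi^*$ lies in $\Crit^\beta(\tree)$; by the inductive hypothesis, the associated maximal ideal $M'$ of $A$ lies in $\Crit^\beta(A)$, and Lemma \ref{lemma:vNA} gives $\nu_I(M')\geq\nu_{NA}(M')=w(\pi')\geq 2$. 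Combined with the inclusions $\Crit^\beta(A)\subseteq\Crit^\gamma(A)$ for $\gamma<\beta$, Proposition \ref{prop:caratt-Critalpha} then delivers $M\in\Crit^\alpha(A)$. The only delicate point is ensuring the index $i$ is taken in the unramified tail $u(\pi)$, so that the tree-side hypothesis can be applied at $v_i$; this is built into the cofiniteness of $u(\pi)$ inside $\pi$, and is the same technical nuance already handled in Proposition \ref{prop:critical-tree}.
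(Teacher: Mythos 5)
Your proof is correct and follows essentially the same route as the paper's: transfinite induction with the limit case handled by the intersection definitions, and the successor case handled by combining Proposition \ref{prop:caratt-Critalpha} with Lemma \ref{lemma:vNA} in both directions, including the key observation that $\Crit^\beta(A)\subseteq\Crit^\gamma(A)$ for $\gamma<\beta$ makes a single witness in $\Crit^\beta(A)$ suffice. The only (harmless) difference is that you anchor the induction at $\alpha=0$ rather than invoking Proposition \ref{prop:critical-tree} for the case $\alpha=1$.
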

\begin{proof}
Let $\tree:=\tree(v,\KK)$. Proposition \ref{prop:critical-tree} shows that $M\in\Crit^1(A)$ (i.e., $M$ is critical) if and only if $\pi\in\Crit^1(\tree)$.

Suppose that, for every $\beta<\alpha$, we have that $M\in\Crit^\beta(A)$ if and only if $\pi\in\Crit^\beta(\tree)$. If $\alpha$ is a limit ordinal, this immediately implies that the same holds for $\alpha$ in place of $\beta$. Suppose that $\alpha=\beta+1$ is a successor ordinal and that $M\in\Crit^\alpha(A)$. If $v'\in u(\pi)$ and $N$ is the maximal ideal associated to $v'$, then $NA$ is a finitely generated ideal contained in $M$. Then by Proposition \ref{prop:caratt-Critalpha}, there is  $M'\in\Crit^\beta(A)$ such that $\nu_{NA}(M')\ge 2$. In particular, if $\pi_0$ denotes the maximal path associated to $M'$, then $\pi_0\in\Crit^\beta(\tree(v,\KK))$ by the induction hypothesis and $\pi_0$ passes through $v'$. Let $\pi'$ denote the subpath of $\pi_0$ starting from $v'$. Then by Lemma \ref{lemma:vNA}, we have $w(\pi')=\nu_{NA}(M')\ge 2$. Hence $\pi\in\Crit^\alpha(\tree(v,\KK))$.

Conversely, suppose that $\pi\in\Crit^\alpha(\tree(v,\KK))$ and let $I=(x_1,\ldots,x_k)\subseteq M$ be a finitely generated ideal. Then there is an integer $t$ such that $x_1,\ldots,x_k\in A_t$ and such that $u(\pi)$ contains a vertex of height $t$. Let $J=(x_1,\ldots,x_k)A_t$, let $N$ be a maximal ideal of $A_t$ such that $J\subseteq N\subseteq M$ and let $v'$ be the vertex associated to $N$. Note that we have $v'\in u(\pi)$. Then $I\subseteq NA\subseteq M$ and hence there is a path $\pi'$ starting in $v'$ with $w(\pi')\ge 2$ such that the maximal extension of $\pi'$ is contained in $\Crit^\beta(\tree(v,\KK))$. Let $M'$ be the maximal ideal associated to the maximal extension of $\pi'$. Then by Lemma \ref{lemma:vNA}, we have $\nu_{I}(M')\ge\nu_{NA}(M')=w(\pi')= 2$. Since $M'\in\Crit^\beta(A)$ by the induction hypothesis, we obtain by Proposition \ref{prop:caratt-Critalpha} that $M\in\Crit^\alpha(A)$.

\end{proof}

\begin{corollary}\label{cor:spr-corresp}
Preserve the notation above. Then, $\spr(A)=\spr(\tree)$.
\end{corollary}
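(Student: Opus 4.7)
The plan is to deduce the corollary directly from the preceding theorem by transporting the definition of SP-rank across the bijection $\Max(A)\longleftrightarrow\MaxPath(\tree)$ of Proposition \ref{prop:correspondence}.

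First I would recall that $\spr(A)$ is the least ordinal $\alpha$ with $\Crit^\alpha(A)=\emptyset$, and, by the lemma immediately preceding the theorem, $\spr(\tree)$ is the least ordinal $\alpha$ with $\Crit^\alpha(\tree)=\emptyset$ (equivalently, both equal the supremum of the SP-heights of their maximal objects). The theorem just proved establishes, for the bijection $M\longleftrightarrow\pi$ provided by Proposition \ref{prop:correspondence}, that $M\in\Crit^\alpha(A)$ if and only if $\pi\in\Crit^\alpha(\tree(v,\KK))$, for every ordinal $\alpha$.

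From this it is immediate that $\Crit^\alpha(A)=\emptyset$ if and only if $\Crit^\alpha(\tree)=\emptyset$, since the bijection identifies the two sets. Taking the least such $\alpha$ on both sides yields $\spr(A)=\spr(\tree)$. Equivalently, one can note that under the bijection the SP-height of $M$ equals the SP-height of the corresponding $\pi$, and take the supremum.

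There is no real obstacle here — the work has already been done in the preceding theorem, and the corollary is essentially a one-line rephrasing. The only thing to be careful about is making the (trivial) observation that the correspondence $M\longleftrightarrow\pi$ is a genuine bijection between $\Max(A)$ and $\MaxPath(\tree)$, so that emptiness of $\Crit^\alpha(A)$ and emptiness of $\Crit^\alpha(\tree)$ really are equivalent and not merely related by an inclusion.
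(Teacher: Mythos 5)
Your argument is correct and is precisely the (implicit) justification the paper intends: the preceding theorem identifies $\Crit^\alpha(A)$ with $\Crit^\alpha(\tree)$ under the bijection of Proposition \ref{prop:correspondence}, so the two sets are simultaneously empty and the least such $\alpha$ agrees on both sides. Nothing further is needed.
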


\section{The construction}
The results in the previous sections show that the problem of determining an almost Dedekind domain with given SP-rank $\alpha$ can be fully translated to the tree case, and that it is enough to solve it in this context. In this section, we introduce a construction that allows to build inductively trees with higher and higher SP-rank. We start with an immediate consequence of Proposition \ref{prop:critical-tree}.
\begin{corollary}\label{cor:sph-tail}
Let $\tree$ be a tree and let $\pi$ be a maximal path; let $v\in\pi$. Then, the SP-height of $\pi$ is equal to the SP-height of $\pi\cap\{v\}^\uparrow$ in $\{v\}^\uparrow$. 
\end{corollary}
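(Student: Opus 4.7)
The plan is transfinite induction on $\alpha$ to establish the stronger statement that $\pi\in\Crit^\alpha(\tree)$ if and only if $\pi\cap\{v\}^\uparrow\in\Crit^\alpha(\{v\}^\uparrow)$; the equality of SP-heights then follows, since the height is defined as the smallest $\alpha$ for which this membership fails. The underlying geometric observation is that the unramified subpaths agree near infinity: writing $u(\pi)$ as the tail of $\pi$ starting at some height $\lambda_0$, either $v$ has height $\leq\lambda_0$, in which case $u(\pi\cap\{v\}^\uparrow)=u(\pi)$, or $v$ lies on $u(\pi)$ itself, in which case $u(\pi\cap\{v\}^\uparrow)$ is the tail of $u(\pi)$ starting from $v$. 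In either case $u(\pi\cap\{v\}^\uparrow)\subseteq\{v\}^\uparrow$, and, for any vertex $v'\geq v$, paths starting at $v'$ and their maximal extensions agree in $\tree$ and in $\{v\}^\uparrow$ up to the finite root-to-$v$ prefix, which is irrelevant for membership in a given $\Crit^\beta$ by the inductive hypothesis.

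The base case $\alpha=0$ and the limit case are immediate from the definitions. For the successor case $\alpha=\beta+1$, the forward direction is easy: restricting the defining condition of $\Crit^{\beta+1}(\tree)$ from $u(\pi)$ to its subset $u(\pi\cap\{v\}^\uparrow)$ yields witness paths lying entirely in $\{v\}^\uparrow$, and the induction hypothesis converts ``maximal extension in $\Crit^\beta(\tree)$'' into ``maximal extension in $\Crit^\beta(\{v\}^\uparrow)$''.

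For the reverse direction, fix an arbitrary $v'\in u(\pi)$. If $v'\in u(\pi\cap\{v\}^\uparrow)$ the witness path is provided directly by the hypothesis; otherwise $v'$ must lie on $u(\pi)$ strictly below $v$. In the latter situation I apply the hypothesis at $v_1:=v$ (which is the starting point of $u(\pi\cap\{v\}^\uparrow)$) to obtain a path $\pi''$ starting at $v$ with $w(\pi'')\geq 2$ and maximal extension in $\Crit^\beta(\{v\}^\uparrow)$, and then prepend to $\pi''$ the segment of $\pi$ from $v'$ to $v$. Since this prefix lies inside $u(\pi)$, all its edges have weight $1$, so the new path starts at $v'$ with unchanged weight and has the same maximal extension in $\tree$ as $\pi''$; the induction hypothesis places this maximal extension in $\Crit^\beta(\tree)$. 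The main obstacle is precisely this reverse step, because one must produce witness paths at all vertices of $u(\pi)$ from witnesses available only along $u(\pi\cap\{v\}^\uparrow)$; it is resolved by exactly the ``prepend an unramified segment'' device that underlies the equivalence of conditions (ii) and (iii) in Proposition \ref{prop:critical-tree}, and everything else is bookkeeping.
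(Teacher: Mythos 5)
Your proof is correct and takes essentially the same route as the paper's one-line argument: the key point in both is that the condition defining $\Crit^\alpha$ depends only on the tail of the path, with your ``prepend an unramified segment'' device being exactly the content of the equivalence of conditions (ii) and (iii) in Proposition \ref{prop:critical-tree}. Your transfinite induction simply spells out in full the bookkeeping that the paper leaves implicit.
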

\begin{proof}
Condition \ref{prop:critical-tree:tail} of Proposition \ref{prop:critical-tree} only depends on the tail of the path.
\end{proof}

Let $\tree_1,\ldots,\tree_n,\ldots$ be a sequence of trees, and let $r_i$ be the root of $\tree_i$. We construct a new weighted tree $\tree:=\Lambda(\tree_1,\ldots,\tree_n,\ldots)$ in the following way:
\begin{itemize}
\item as a set, $\tree$ is the disjoint union of $\tree_i$ (for each $i$), and of a countable sequence $(x_i)_{i=0}^n$;
\item the edges of $\tree$ are:
\begin{itemize}
\item $(a,b)$, where $a,b\in\tree_i$ and $(a,b)$ is an edge in $\tree_i$;
\item $(x_i,x_{i+1})$ for all $i\geq 0$;
\item $(x_i,r_{i+1})$ for all $i\geq 0$;
\end{itemize}
\item their weights are:
\begin{itemize}
\item if $a,b\in\tree_i$, then $w(a,b)$ is the weight of $(a,b)$ in $\tree_i$;
\item $w(x_i,x_{i+1})=1$;
\item $w(x_i,r_{i+1})=2$.
\end{itemize}
\end{itemize}

It is not hard to see that these conditions really define a tree with root $x_0$; indeed, the unique path connecting $x_0$ to $x_n$ is $(x_0,x_1,\ldots,x_n)$ (as the only edge terminating in $x_n$ is $(x_{n-1},x_n)$), while if $v\in\tree_i$ for some $i$ then we can construct a path from $x_0$ to $v$ by joining $(x_0,\ldots,x_{i-1})$ with the path from $r_i$ to $v$, and this is unique since if $(x_0,v_1,\ldots,v_n=v)$ is a path connecting $x_0$ to $v$ then there must be an edge $(v_j,v_{j+1})$ with $v_j\notin\tree_i$ and $v_{j+1}\in\tree_i$, and the unique possibility is $v_j=x_{i-1}$ and $v_{j+1}=r_i$.

An alternative way to construct $\tree=\Lambda(\tree_1,\ldots,\tree_n,\ldots)$ is by recursion:
\begin{itemize}
\item we take $x_0$ as the root;
\item $x_0$ has two direct successors, $x_1$ and $r_1$, with $w(x_0,x_1)=1$ and $w(x_0,r_1)=2$;
\item $\{r_1\}^\uparrow\simeq\tree_1$;
\item $x_1$ has two direct successors, $x_2$ and $r_2$, with weight $1$ and $2$, respectively, and $\{r_2\}^\uparrow=\tree_2$;
\item $x_2$ has two direct successors, and so on.
\end{itemize}

\begin{proposition}
Let $\pi$ be a maximal path in $\tree_i$, and let $\widetilde{\pi}$ be the maximal path in $\tree$ extending $\pi$. Then, $\widetilde{\pi}=(x_0,\ldots,x_{i-1})\cup\{\pi\}$, and $w(\widetilde{\pi})=2w(\pi)$.
\end{proposition}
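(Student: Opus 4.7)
The plan is to unwind the recursive description of $\tree = \Lambda(\tree_1,\ldots,\tree_n,\ldots)$ and track weights. Since $\widetilde{\pi}$ is a maximal path in $\tree$, it must start at the root $x_0$, and it must extend $\pi$, which lives entirely in $\tree_i$. The first task is therefore to identify the initial segment of $\widetilde{\pi}$ from $x_0$ up to the root $r_i$ of $\tree_i$.

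For this, I would use the observation already made right after the construction: the unique path in $\tree$ from $x_0$ to any vertex of $\tree_i$ must pass through $x_0,x_1,\ldots,x_{i-1}$ and then through $r_i$, because the only edge terminating in $r_i$ is $(x_{i-1}, r_i)$ and the only edges terminating in $x_j$ (for $j \ge 1$) are $(x_{j-1},x_j)$. In particular, the unique path from $x_0$ to $r_i$ in $\tree$ is exactly $(x_0,x_1,\ldots,x_{i-1},r_i)$. Since edges within $\tree_i$ are edges of $\tree$ by construction, concatenating this initial segment with $\pi$ yields a path in $\tree$ starting at the root, which is infinite (as $\pi$ is maximal in $\tree_i$, hence infinite) and starts at $x_0$, so it is maximal. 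Uniqueness of the maximal extension of $\pi$ then forces $\widetilde{\pi} = (x_0, x_1,\ldots,x_{i-1}) \cup \pi$.

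For the weight computation, I would just apply the definition $w(\widetilde{\pi}) = \prod_{k\ge 0} w(\widetilde{\pi}_k, \widetilde{\pi}_{k+1})$ to the description of $\widetilde{\pi}$ above. The edges $(x_0,x_1),\ldots,(x_{i-2},x_{i-1})$ each contribute weight $1$, the edge $(x_{i-1},r_i)$ contributes weight $2$, and the remaining edges are precisely the edges of $\pi$ inside $\tree_i$, whose weights in $\tree$ agree with their weights in $\tree_i$ by construction. Pulling the factor of $2$ out gives $w(\widetilde{\pi}) = 2\cdot w(\pi)$.

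There is no real obstacle here: the statement is essentially a sanity check that the recursive construction behaves as advertised. The only mild subtlety is to note that the initial segment $(x_0,\ldots,x_{i-1})$ is unramified (weight one on each edge), so the only ramification picked up in passing from $\pi$ to $\widetilde{\pi}$ is the single factor $w(x_{i-1},r_i)=2$.
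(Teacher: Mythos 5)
Your argument is correct and matches the paper's proof: both identify $(x_0,\ldots,x_{i-1})\cup\pi$ as the unique maximal extension and compute the weight as the product of the unit weights $w(x_j,x_{j+1})=1$, the single factor $w(x_{i-1},r_i)=2$, and $w(\pi)$. Your version just spells out the uniqueness of the initial segment in slightly more detail.
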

\begin{proof}
As $x_0$ is the root of $\tree$, $(x_0,\ldots,x_{i-1})\cup\{\pi\}$ is a maximal path, and clearly it extends $\pi$. Moreover,
\begin{equation*}
w(\widetilde{\pi})=w(x_0,x_1)\cdots w(x_{i-2},x_{i-1})\cdot w(x_{i-1},r_i)\cdot w(\pi)=2w(\pi)
\end{equation*}
by construction. The claim is proved.
\end{proof}

\begin{proposition}\label{prop:spr-constr}
Preserve the notation above. Then,
\begin{equation*}
\spr(\Lambda(\tree_1,\ldots,\tree_n,\ldots))=\left(\limsup_{n\to\infty}\spr(\tree_n)\right)+1.
\end{equation*}
\end{proposition}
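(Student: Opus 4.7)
The plan is to identify every maximal path of $\tree := \Lambda(\tree_1,\tree_2,\ldots)$, compute the SP-height of each, and take the supremum. I would split the maximal paths into two classes: the \emph{spine} $\sigma := (x_0,x_1,x_2,\ldots)$, and, for each $i\geq 1$ and each $\pi\in\MaxPath(\tree_i)$, the extended path $\widetilde{\pi}:=(x_0,\ldots,x_{i-1})\cup\pi$ already considered in the preceding proposition.

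For a path of the second type, I would exploit that the subtree $\{r_i\}^\uparrow$ of $\tree$ is weight-isomorphic to $\tree_i$ and that $\widetilde{\pi}\cap\{r_i\}^\uparrow=\pi$. Applying Corollary~\ref{cor:sph-tail}, extended from criticality ($\alpha=1$) to all ordinals by a transfinite induction (using that every path starting from a vertex in $\{r_i\}^\uparrow$ stays in $\{r_i\}^\uparrow$), I would conclude $\sph_\tree(\widetilde{\pi})=\sph_{\tree_i}(\pi)$.

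For the spine $\sigma$, every edge has weight $1$, so $u(\sigma)=\sigma$. I would prove, by transfinite induction on $\beta$, that $\sigma\in\Crit^\beta(\tree)$ if and only if $L\geq\beta$, where $L:=\limsup_n\spr(\tree_n)$. The base and limit cases are immediate; for the successor step $\beta=\gamma+1$, I would observe that any path $\pi'$ starting from some $x_i$ with $w(\pi')>1$ must, at some point, cross an edge $(x_{j-1},r_j)$ with $j>i$ (the only edges of weight larger than $1$) and then proceed inside $\tree_j$; its maximal extension in $\tree$ is therefore a type $2$ path $\widetilde{\rho}$ for some $\rho\in\MaxPath(\tree_j)$. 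By the previous observation, $\widetilde{\rho}\in\Crit^\gamma(\tree)$ iff $\rho\in\Crit^\gamma(\tree_j)$, which in turn holds for some such $\rho$ iff $\spr(\tree_j)>\gamma$. Hence $\sigma\in\Crit^{\gamma+1}(\tree)$ iff for every $i$ there is some $j>i$ with $\spr(\tree_j)>\gamma$, i.e., iff $L>\gamma$.

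Putting these computations together would yield $\sph(\sigma)=L+1$, and then taking the supremum of SP-heights over all maximal paths gives the claimed formula. The main difficulty will be the transfinite extension of Corollary~\ref{cor:sph-tail}: since $\Crit^\alpha(\tree)$ is defined through maximal extensions in the ambient tree rather than in the relevant subtree, at each successor step one must verify carefully that the ``maximal extension lies in $\Crit^\gamma(\tree)$'' condition can be read off inside the subtree, and the induction must be organised so that the spine analysis and the subtree identification feed into one another without circularity.
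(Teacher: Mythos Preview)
Your approach—splitting the maximal paths into the spine $\sigma=(x_i)_i$ and the paths entering some $\tree_i$, invoking Corollary~\ref{cor:sph-tail} for the latter, and analyzing the spine directly—is exactly the paper's. Note that Corollary~\ref{cor:sph-tail} is already stated for the full SP-height (not just $\alpha=1$), and that $u(\widetilde{\pi})\subseteq\tree_i$ because $w(x_{i-1},r_i)=2$, so the difficulties you anticipate with extending the corollary and with circularity between the spine and the subtree analysis do not actually arise.
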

\begin{proof}
Let $\pi$ be a maximal path in $\tree:=\Lambda(\tree_1,\ldots,\tree_n,\ldots)$. If $\pi$ contains $r_n$ for some $n$, then the tail of $\pi$ is in $\tree_n$, and thus its SP-height is the same as the SP-height of $\pi\cap\{r_n\}^\uparrow$ in $\{r_n\}^\uparrow\simeq\tree_n$ by Corollary \ref{cor:sph-tail}, and in particular is at most $\spr(\tree_n)$.

Suppose that $\pi$ does not contain any $r_i$: then, $\pi=(x_i)_{i=0}^\infty$ is unramified. For every $i$, any path $\pi'\neq\pi$ starting from $x_i$ must contain the edge $(x_j,r_{j+1})$ for some $j\geq 1$; since $w(x_j,r_{j+1})=2$, $\pi'$ is not unramified. Hence, if $\beta\leq\alpha:=\limsup_n\spr(\tree_n)$ any such $\pi'$ is contained in a maximal path of SP-height $\geq\beta$. It follows that $\sph\pi>\alpha$. Moreover, there are no other paths of SP-height bigger than $\alpha$ (by the previous paragraph) and thus $\sph\pi\leq\alpha+1$. Thus $\sph\pi=\alpha+1$, and so $\spr\tree=\alpha+1$.
\end{proof}

Fix now a tree $\tree$ of SP-rank $1$. We want to construct recursively a sequence $\tree_\alpha$ of trees such that $\spr\tree_\alpha=\alpha$, where $\alpha$ is a countable successor ordinal. We thus define:
\begin{enumerate}
\item if $\alpha=1$, then $\tree_1:=\tree$;
\item if $\alpha=\beta+1$ and $\beta$ is a successor ordinal, then
\begin{equation*}
\tree_\alpha:=\Lambda(\tree_\beta,\tree_\beta,\ldots,);
\end{equation*}
\item if $\alpha=\beta+1$ and $\beta$ is a limit ordinal, then $\beta$ has countable cofinality, and thus we can find an increasing sequence $\{\gamma_n\}_{n=0}^\infty$ of ordinals whose supremum is $\beta$, and we define
\begin{equation*}
\tree_\alpha:=\Lambda(\tree_{\gamma_0+1},\tree_{\gamma_1+1},\ldots,\tree_{\gamma_n+1},\ldots).
\end{equation*}
\end{enumerate}
Note that, in the last construction, there are many possible sequences $\{\gamma_n\}$, so the sequences of trees $\{\tree_\alpha\}$ is not uniquely determined by $\tree_1$.

\begin{theorem}\label{teor:sprank-construction}
Preserve the notation above. Then, $\spr\tree_\alpha=\alpha$ for all countable successor ordinal numbers $\alpha$.
\end{theorem}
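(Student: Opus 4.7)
The plan is to prove the statement by transfinite induction on the countable successor ordinal $\alpha$, using Proposition \ref{prop:spr-constr} as the key computational tool. Note that the recursive construction is well-defined precisely on successor ordinals: in Case (3), every tree fed into $\Lambda$ is of the form $\tree_{\gamma_n+1}$, i.e., indexed by a successor ordinal, so the recursion never requires a value $\tree_\beta$ for $\beta$ a limit ordinal.

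The base case $\alpha=1$ is immediate: by hypothesis $\spr\tree_1=\spr\tree=1$. For the inductive step, I fix a successor ordinal $\alpha=\beta+1$ and assume that $\spr\tree_\gamma=\gamma$ for every successor ordinal $\gamma<\alpha$. I split into two cases according to whether $\beta$ itself is a successor or a limit. If $\beta$ is a successor ordinal, then $\tree_\alpha=\Lambda(\tree_\beta,\tree_\beta,\ldots)$; the induction hypothesis gives $\spr\tree_\beta=\beta$, so the sequence of SP-ranks entering $\Lambda$ is constantly $\beta$, and Proposition \ref{prop:spr-constr} yields
\begin{equation*}
\spr\tree_\alpha=\left(\limsup_n\spr\tree_\beta\right)+1=\beta+1=\alpha.
\end{equation*}

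If instead $\beta$ is a limit ordinal, then by construction $\tree_\alpha=\Lambda(\tree_{\gamma_0+1},\tree_{\gamma_1+1},\ldots)$ where $\{\gamma_n\}$ is an increasing sequence cofinal in $\beta$. Each $\gamma_n+1$ is a successor ordinal with $\gamma_n+1\leq\beta<\alpha$, so the induction hypothesis applies and gives $\spr\tree_{\gamma_n+1}=\gamma_n+1$. Since $\beta$ is a limit ordinal and $\{\gamma_n\}$ is cofinal in $\beta$, the shifted sequence $\{\gamma_n+1\}$ is also cofinal in $\beta$; being increasing, its $\limsup$ equals its supremum, which is $\beta$. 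Proposition \ref{prop:spr-constr} then gives $\spr\tree_\alpha=\beta+1=\alpha$, completing the induction.

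There is not really a substantial obstacle here: the entire proof is bookkeeping around Proposition \ref{prop:spr-constr}. The one subtlety worth double-checking is the limit case of the inductive step, where one must verify that the recursion only invokes $\tree_\gamma$ for successor $\gamma<\alpha$ (so the inductive hypothesis is applicable) and that $\limsup_n(\gamma_n+1)=\beta$; both points rely on the fact that the construction in item (3) deliberately adds $+1$ to each $\gamma_n$ precisely to stay within the domain of the recursion.
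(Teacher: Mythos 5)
Your proof is correct and follows essentially the same route as the paper: transfinite induction on $\alpha$, splitting the successor step according to whether the predecessor $\beta$ is a successor or a limit, and in each case reading off the rank from Proposition \ref{prop:spr-constr} via $\limsup_n(\gamma_n+1)=\sup_n\gamma_n=\beta$. Your added remark that the recursion only ever invokes $\tree_\gamma$ for successor $\gamma$ is a useful clarification but does not change the argument.
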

\begin{proof}
We proceed by induction on $\alpha$. If $\alpha=1$ then $\spr\tree_1=1$ by hypothesis.

If $\alpha=\beta+1$ and $\beta$ is a successor ordinal, then $\tree_\beta$ is defined and $\spr\tree_\beta=\beta$ by inductive hypothesis, so that $\spr\tree_\alpha=\spr\tree_\beta+1=\beta+1=\alpha$ by Proposition \ref{prop:spr-constr}.

Suppose $\alpha=\beta+1$ and $\beta$ is a limit ordinal. Then, $\lim_n(\gamma_n+1)=\lim_n\gamma_n=\beta$. Hence, using again Proposition \ref{prop:spr-constr} and the inductive hypothesis we have
\begin{equation*}
\spr\tree_\alpha=\left(\sup_n\spr\tree_{\gamma_n+1}\right)+1=\left(\sup_n(\gamma_n+1)\right)+1=\beta+1=\alpha.
\end{equation*}
Hence $\spr\tree_\alpha=\alpha$ for every $\alpha$, as claimed.
\end{proof}

We want to use this construction in two different ways, corresponding to Propositions \ref{prop:tree->val:balanced} and \ref{prop:tree->val:locbound}.

\begin{theorem}\label{teor:sprank-balanced}
Let $\alpha$ be a countable successor ordinal. Then:
\begin{enumerate}[(a)]
\item\label{teor:sprank-balanced:tree} there is a totally balanced tree $\tree$ with $\spr\tree=\alpha$;
\item\label{teor:sprank-balanced:ad} for every discrete valuation ring $V$, there is an almost Dedekind domain $A$ such that $V\subseteq A$ is integral and $\spr(A)=\alpha$.
\end{enumerate}
\end{theorem}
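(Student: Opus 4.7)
The plan is to derive both parts from the recursive construction of Theorem \ref{teor:sprank-construction}, which already produces a tree $\tree_\alpha$ with $\spr\tree_\alpha=\alpha$ from any seed tree $\tree_1$ of SP-rank $1$. The two additional ingredients are: (i) choosing the seed so that $\tree_\alpha$ is totally balanced, which gives (a); and (ii) translating such a tree into an almost Dedekind domain via Proposition \ref{prop:tree->val:balanced} and Corollary \ref{cor:spr-corresp}, which gives (b). The main subtlety is to pick the seed's outbound weight so that the $\Lambda$ operation preserves total balance along the induction.

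For (a), I would take as seed $\tree_1$ the tree in which every vertex has exactly three children joined by edges of weight $1$. This tree is totally balanced with common outbound weight $3$, and since every path has weight $1$, no maximal path satisfies the condition for membership in $\Crit^1(\tree_1)$, so $\spr\tree_1=1$. The crucial point is that $\Lambda$ preserves this invariant: the backbone vertices $x_i$ of $\Lambda(\tree_{j_1},\tree_{j_2},\ldots)$ have two outbound edges, of weights $1$ and $2$, so their outbound weight is $3$; every other vertex belongs to some $\tree_{j_k}$ and retains the outbound weight it had there, which is $3$ by the inductive assumption. A straightforward induction on $\alpha$ along the recursion of Theorem \ref{teor:sprank-construction}, distinguishing the successor-of-successor and successor-of-limit cases, then shows that each $\tree_\alpha$ is totally balanced with common outbound weight $3$, while $\spr\tree_\alpha=\alpha$ follows directly from that theorem.

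For (b), given a DVR $V$ with quotient field $K$ and discrete valuation $v$, I apply Proposition \ref{prop:tree->val:balanced} to the totally balanced tree $\tree_\alpha$ built in (a) to obtain a sequence $\KK:K=K_0\subsetneq K_1\subsetneq\cdots$ of finite separable extensions of $K$ with $\tree(v,\KK)\simeq\tree_\alpha$. Set $A$ to be the integral closure of $V$ in $K_\infty=\bigcup_iK_i$; then $A$ is integral over $V$ by construction. To invoke Corollary \ref{cor:spr-corresp}, I still need $A$ to be almost Dedekind, i.e., every maximal path of $\tree_\alpha$ to be finitely ramified. I would verify this by the same induction used in (a): in the base case all paths have weight $1$; at the inductive step, a maximal path of $\Lambda(\tree_{j_1},\tree_{j_2},\ldots)$ is either the unramified backbone $(x_i)_{i\geq 0}$ of weight $1$, or it traverses $x_0,\ldots,x_{k-1}$ and then enters $\tree_{j_k}$ through the edge of weight $2$, so its weight equals $2\cdot w(\pi')$ where $\pi'$ is a maximal path in $\tree_{j_k}$ and is finite by induction. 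Corollary \ref{cor:spr-corresp} then yields $\spr(A)=\spr\tree_\alpha=\alpha$, finishing the proof.

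The only real obstacle is the bookkeeping of (a): one has to choose the outbound weight of the seed to match the contribution $1+2=3$ that $\Lambda$ produces at every backbone vertex, so that total balance is an inductive invariant. Once this is arranged, everything else is a clean combination of Theorem \ref{teor:sprank-construction}, Proposition \ref{prop:tree->val:balanced}, and Corollary \ref{cor:spr-corresp}.
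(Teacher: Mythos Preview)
Your proposal is correct and follows essentially the same route as the paper: the same seed tree with three weight-$1$ children per vertex, the same inductive check that $\Lambda$ preserves total balance with outbound weight $3$, and the same appeal to Proposition~\ref{prop:tree->val:balanced} and Corollary~\ref{cor:spr-corresp} for part~(b). You are in fact a bit more careful than the paper in explicitly verifying that every maximal path of $\tree_\alpha$ is finitely ramified (so that $A$ is almost Dedekind and Corollary~\ref{cor:spr-corresp} applies), a point the paper's proof leaves implicit.
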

\begin{proof}
\ref{teor:sprank-balanced:tree} Let $\tree_1$ be the $\omega$-tree where each element has three direct successors and each edge has weight $1$. Then, $\spr\tree_1=1$ since all paths are unramified. Construct a sequence $\{\tree_\alpha\}$ as above; we claim that each $\tree_\alpha$ is totally balanced with weight $3$, and we proceed by induction.

The claim is trivially true for $\alpha=1$. We claim that, if $\tree':=\Lambda(\tree'_1,\ldots,\tree'_n,\ldots)$ and each $\tree'_i$ is totally balanced and has weight $3$ then the same holds for $\tree'$. Let thus $v\in\tree'$: if $v\in\tree'_i$ for some $i$ then the claim is true by induction. If $v\notin\tree'_i$, then (in the notation of the beginning of this section) $v=x_i$ for some $i$, and thus $v$ has two direct successors, $x_{i+1}$ and $r_{i+1}$, with $w(v,x_{i+1})=1$ and $w(v,r_{i+1})=2$. Thus also $w(v)=3$, and $\tree'$ is totally balanced.

Since all $\tree_\alpha$ are built with the construction $\Lambda$, by induction it follows that each $\tree_\alpha$ is balanced with weight $3$.

\ref{teor:sprank-balanced:ad} Let $\tree$ be a totally balanced tree with SP-rank $\alpha$. By Proposition \ref{prop:tree->val:balanced}, we can find a sequence $\KK$ of algebraic field extensions of the residue field $K$ of $V$ such that $\tree(V,\KK)\simeq\tree$; by the correspondence between critical sets, the integral closure $A$ of $V$ in $K_\infty:=\bigcup_{L\in\KK}L$ has SP-rank $\alpha$. The claim is proved.
\end{proof}

\begin{remark}
In the first part of the previous theorem, we constructed a totally balanced tree $\tree$ with weight $3$. To construct a sequence of trees with SP-rank $\alpha$ of weight $n>3$, it is enough to take as $\tree_1$ the tree where each element has $n$ direct successors, and slightly change the construction $\Lambda$, so that $w(x_i,r_{i+1})=n-1$ (instead of $2$) for all $i$.
\end{remark}

\begin{theorem}\label{teor:sprank-countable}
Let $\alpha$ be a countable successor ordinal. Then:
\begin{enumerate}[(a)]
\item\label{teor:sprank-countable:tree} there is a tree $\tree$ such that $\MaxPath(\tree)$ is countable and $\spr\tree=\alpha$;
\item\label{teor:sprank-countable:ad} for every discrete valuation ring $V$ with finite residue field, there is an almost Dedekind domain $A$ such that $V\subseteq A$ is integral, $\spr(A)=\alpha$ and $\Max(A)$ is countable.
\end{enumerate}
\end{theorem}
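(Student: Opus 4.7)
The plan is to run the inductive construction from Theorem \ref{teor:sprank-construction} starting from a sufficiently small seed tree $\tree_1$, and then transfer the resulting tree to an almost Dedekind domain via Proposition \ref{prop:tree->val:locbound}. The new ingredient compared with Theorem \ref{teor:sprank-balanced} is that the seed must be chosen so that \emph{countability} of the set of maximal paths is preserved through each application of $\Lambda$, while local boundedness also has to be maintained (since total balancedness is no longer required).

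For part (a), I would take $\tree_1$ to be a single infinite ray: vertices $v_0<v_1<v_2<\cdots$ with $w(v_i,v_{i+1})=1$ for every $i$. This tree has exactly one maximal path, every subpath has weight $1$, and every outbound weight is $1$, so $\tree_1$ is locally bounded and $\Crit^1(\tree_1)=\emptyset$; thus $\spr\tree_1=1$. I then define the sequence $\{\tree_\alpha\}$ exactly as in the paragraph before Theorem \ref{teor:sprank-construction}, so that $\spr\tree_\alpha=\alpha$ follows from that theorem for every countable successor ordinal $\alpha$.

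The remaining content of (a) is an inductive verification that every $\tree_\alpha$ is locally bounded, has countably many maximal paths, and has every maximal path finitely ramified. For the inductive step, write $\tree_\alpha=\Lambda(\tree'_1,\tree'_2,\ldots)$, where each $\tree'_i$ already satisfies these properties. The maximal paths of $\tree_\alpha$ are the spine $(x_i)_{i\geq 0}$ together with, for each $i$ and each $\pi\in\MaxPath(\tree'_i)$, the concatenation of $(x_0,\ldots,x_{i-1})$ with $\pi$; this is a countable union of countable sets. At level $n$, the vertices of $\tree_\alpha$ are $x_n$ (of weight $3$) and, for $i\leq n$, the level-$(n-i)$ vertices of $\tree'_i$, so the outbound weights at level $n$ form a bounded set. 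The spine has weight $1$, while every other maximal path has weight $2w(\pi)$ for some finitely ramified $\pi$, and so is also finitely ramified.

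For part (b), I apply Proposition \ref{prop:tree->val:locbound} to the discrete valuation $v$ of $V$ (whose residue field is finite by hypothesis) and to the locally bounded tree $\tree:=\tree_\alpha$, obtaining a chain $\KK$ of finite separable extensions of $K$ with $\tree(v,\KK)\simeq\tree_\alpha$. Let $A$ be the integral closure of $V$ in $K_\infty=\bigcup_iK_i$; then $V\subseteq A$ is integral by construction, and since every maximal path of $\tree_\alpha$ is finitely ramified, $A$ is almost Dedekind by the characterisation established in Section \ref{sect:correspond}. The bijection in Proposition \ref{prop:correspondence} gives that $\Max(A)$ is countable, and Corollary \ref{cor:spr-corresp} yields $\spr(A)=\spr\tree_\alpha=\alpha$. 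I do not expect any step to present a genuine obstacle; the most delicate bookkeeping is the limit-of-successors case of the recursion, where one must check that the chosen cofinal sequence $\{\gamma_n\}$ feeds $\Lambda$ only trees that already have countably many maximal paths and bounded local weights, but this is immediate from the induction hypothesis.
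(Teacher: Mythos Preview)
Your proposal is correct and follows essentially the same approach as the paper: the same seed tree (a single unramified ray), the same recursive use of $\Lambda$ together with Theorem \ref{teor:sprank-construction}, and the same appeal to Proposition \ref{prop:tree->val:locbound} and Corollary \ref{cor:spr-corresp} for part \ref{teor:sprank-countable:ad}. You are in fact more careful than the paper in explicitly verifying local boundedness of each $\tree_\alpha$ and finite ramification of all maximal paths---both genuinely needed for part \ref{teor:sprank-countable:ad}---which the paper's own proof leaves implicit.
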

\begin{proof}
\ref{teor:sprank-countable:tree} Let $\tree_0$ be a totally ordered $\omega$-tree, and let the weight of all the edges be $1$. Construct a sequence $\{\tree_\alpha\}$ as above; we claim that $\MaxPath(\tree_\alpha)$ is countable for every $\alpha$.

Let $\tree':=\Lambda(\tree'_1,\ldots,\tree'_n,\ldots)$. We claim that if each $\MaxPath(\tree'_i)$ is countable, then also $\MaxPath(\tree')$ is countable. Indeed, if $\pi$ is a maximal path in $\tree'$, then either the tail of $\pi$ is in some $\{r_i\}^\uparrow\simeq\tree'_i$ or $\pi=(x_i)$ (where $r_i$ and $x_i$ and as in the definition of $\Lambda$); thus, $\MaxPath(\tree')$ is equal to a countable union of a countable family (the $\MaxPath(\tree'_i)$) plus another element (the path $(x_i)_{i\inN}$). Hence $\MaxPath(\tree')$ is countable. Since all the $\tree_\alpha$ are constructed with the construction $\Lambda$ and $\MaxPath(\tree_1)$ is a singleton, by induction it follows that each $\MaxPath(\tree_\alpha)$ is countable.

\ref{teor:sprank-countable:ad} Apply Proposition \ref{prop:tree->val:locbound} to the family found in the previous part of the proof.
\end{proof}

These two constructions are very general; we give an example in the next corollary.
\begin{corollary}\label{cor:Q}
Let $p$ be a prime number, and let $\overline{\insQ}$ be the algebraic closure of $\insQ$. For every countable successor ordinal number $\alpha$ there is a field $F\subseteq\overline{\insQ}$ such that the integral closure of $\insZ_{(p)}$ in $F$ is an almost Dedekind domain of SP-rank $\alpha$ with countable maximal space.
\end{corollary}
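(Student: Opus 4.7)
The plan is to invoke Theorem \ref{teor:sprank-countable}\ref{teor:sprank-countable:ad} with $V := \insZ_{(p)}$. Since $p$ is prime, $\insZ_{(p)}$ is a discrete valuation ring whose residue field $\mathbb{F}_p$ is finite, so the hypotheses of the theorem are satisfied. This immediately yields an almost Dedekind domain $A$, integral over $\insZ_{(p)}$, with $\spr(A)=\alpha$ and $\Max(A)$ countable. The only remaining point is to verify that $A$ can be realized as the integral closure of $\insZ_{(p)}$ in a subfield of $\overline{\insQ}$.

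To see this, I would unpack how $A$ is produced in Theorem \ref{teor:sprank-countable}\ref{teor:sprank-countable:ad}: one first constructs a tree $\tree$ with $\spr\tree=\alpha$ and $\MaxPath(\tree)$ countable (part \ref{teor:sprank-countable:tree}), then applies Proposition \ref{prop:tree->val:locbound} to obtain a sequence $\KK\colon \insQ=K_0\subsetneq K_1\subsetneq\cdots$ of \emph{finite separable} extensions of $\insQ$ with $\tree(v,\KK)\simeq\tree$, and finally takes $A$ to be the integral closure of $\insZ_{(p)}$ in $K_\infty:=\bigcup_i K_i$. Since each $K_i$ is a finite extension of $\insQ$, every element of $K_i$ is algebraic over $\insQ$, and hence $K_i\subseteq\overline{\insQ}$ for all $i$. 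Therefore $K_\infty\subseteq\overline{\insQ}$ as well, and setting $F:=K_\infty$ produces the field required by the statement.

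There is essentially no obstacle: the corollary is a direct specialization of Theorem \ref{teor:sprank-countable}\ref{teor:sprank-countable:ad} to $V=\insZ_{(p)}$, combined with the elementary observation that a countable union of finite algebraic extensions of $\insQ$ is still contained in $\overline{\insQ}$. The SP-rank equality $\spr(A)=\alpha$ comes from Corollary \ref{cor:spr-corresp}, and the countability of $\Max(A)$ from the bijection $\Max(A)\leftrightarrow\MaxPath(\tree)$ of Proposition \ref{prop:correspondence} together with the countability of $\MaxPath(\tree)$ established in Theorem \ref{teor:sprank-countable}\ref{teor:sprank-countable:tree}.
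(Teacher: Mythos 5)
Your proof is correct and is essentially the paper's own argument: the paper simply notes that $\insZ_{(p)}$ has finite residue field and applies Theorem \ref{teor:sprank-countable}. Your additional unpacking of why the resulting field $K_\infty$ lies inside $\overline{\insQ}$ is a harmless (and accurate) elaboration of a step the paper leaves implicit.
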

\begin{proof}
The valuation domain $\insZ_{(p)}$ has finite residue field. Hence we can apply Theorem \ref{teor:sprank-countable}.
\end{proof}

Theorems \ref{teor:sprank-balanced} and \ref{teor:sprank-countable} were given only for successor ordinals. This is unavoidable in the current setting, as we show next.
\begin{proposition}\label{prop:spr-successor}
Let $V$ be a discrete valuation ring and let $A$ be an almost Dedekind domain that is an integral extension of $V$. Then, the SP-rank of $A$ is a successor ordinal.
\end{proposition}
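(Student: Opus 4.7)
The plan is to use a compactness argument on the inverse topology of $\Max(A)$. The key observation is that since $V$ is a DVR with uniformizer $t$ and $A$ is integral over $V$, every maximal ideal of $A$ lies over the unique maximal ideal $(t)$ of $V$, so $tA$ is a nonzero element contained in the Jacobson radical of $A$. Invoking the fact recalled in the preliminaries that on a one-dimensional domain with nonzero Jacobson radical the inverse topology on $\Max(A)$ coincides with the Zariski topology and the resulting space is compact, I would conclude that $\Max(A)^\inverse$ is compact.

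With compactness in hand, I would argue by contradiction. Suppose the SP-rank $\alpha$ of $A$ is a limit ordinal. By minimality of $\alpha$ in the definition of SP-rank, we have $\Crit^\beta(A)\neq\emptyset$ for every $\beta<\alpha$, whereas by the definition of the derived sequence at a limit stage
\begin{equation*}
\Crit^\alpha(A)=\bigcap_{\beta<\alpha}\Crit^\beta(A)=\emptyset.
\end{equation*}
Each $\Crit^\beta(A)$ is closed in $\Max(A)^\inverse$ (as recalled in the preliminaries), and the family $\{\Crit^\beta(A)\}_{\beta<\alpha}$ is totally ordered by reverse inclusion; any finite subfamily therefore admits a minimum, which is nonempty. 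By compactness of $\Max(A)^\inverse$ applied to the finite intersection property, the full intersection must be nonempty, contradicting the display above. Hence $\alpha$ cannot be a limit ordinal and must be a successor.

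The main obstacle is really the verification that $\Max(A)^\inverse$ is compact; this is exactly where the hypothesis that $V$ is a DVR enters, via the production of a nonzero element of the Jacobson radical of $A$. Once compactness is secured, the argument is a standard application of the finite intersection property to a nested chain of nonempty closed subsets of a compact space.
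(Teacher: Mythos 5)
Your proposal is correct and follows essentially the same argument as the paper: compactness of $\Max(A)^\inverse$ via the nonzero Jacobson radical coming from integrality over the DVR, followed by the finite intersection property applied to the nested closed sets $\Crit^\beta(A)$. No gaps.
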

\begin{proof}
The maximal ideal $\mathfrak{m}$ of $V$ is contained in every maximal ideal of $A$, and thus the Jacobson radical of $A$ is nonzero. Therefore, $\Max(A)^\inverse$ is a compact space. If $\alpha=\spr(A)$ is a limit ordinal, then
\begin{equation*}
\emptyset=\bigcap_{\beta<\alpha}\Crit^\beta(A),
\end{equation*}
and no finite subintersection is empty. However, since every $\Crit^\beta(A)$ is closed in the inverse topology, this contradicts the compactness of the maximal space. Thus $\alpha$ must be a successor ordinal.
\end{proof}

\section{Limit ordinals}
As shown in Proposition \ref{prop:spr-successor}, the previous construction cannot give us almost Dedekind domains with a limit ordinal as its SP-rank. To do so, we must start with a Dedekind domain instead of a DVR. We consider this problem in a slightly more general way. See \cite[Section 6.3]{fontana-factoring} and \cite{starloc} for the definition and for properties of Jaffard families.

\begin{proposition}\label{prop:spr-jaffard}
Let $A$ be an almost Dedekind domain and let $\Theta$ be a Jaffard family on $A$. Then:
\begin{enumerate}[(a)]
\item for every $P\in\Max(A)$, we have $\sph(P)=\sph(PT)$, where $T$ is the only element of $\Theta$ such that $PT\neq T$;
\item $\spr(A)=\sup\{\spr(T)\mid T\in\Theta\}$.
\end{enumerate}
\end{proposition}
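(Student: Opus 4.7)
The plan is to reduce (b) to (a) and focus on (a). Since $\spr(A)=\sup_{P\in\Max(A)}\sph(P)$ and the Jaffard bijection $\Max(A)\longleftrightarrow\bigsqcup_{T\in\Theta}\Max(T)$ via $P\leftrightarrow PT$ partitions $\Max(A)$, part (b) will follow at once from (a) by taking suprema:
\begin{equation*}
\spr(A)=\sup_{P\in\Max(A)}\sph(P)=\sup_{T\in\Theta}\sup_{N\in\Max(T)}\sph(N)=\sup_{T\in\Theta}\spr(T).
\end{equation*}

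For (a), I would prove by transfinite induction on $\alpha$ that the Jaffard bijection above restricts to a bijection
\begin{equation*}
\Crit^\alpha(A)\longleftrightarrow\bigsqcup_{T\in\Theta}\Crit^\alpha(T);
\end{equation*}
assertion (a) then drops out by reading off the least $\alpha$ with $P\notin\Crit^\alpha(A)$. The base case $\alpha=0$ is the defining property of Jaffard families, and the limit case is automatic because intersections commute with disjoint unions.

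The content lies in the successor step $\alpha=\beta+1$. Assuming the correspondence at level $\beta$, the SP-derived ring $T_\beta(A)=\bigcap\{A_M\mid M\in\Crit^\beta(A)\}$ splits along the Jaffard decomposition as $\bigcap_{T\in\Theta}T_\beta(T)$, using the identification $A_M=T_{MT}$ supplied by the Jaffard-family structure. The first technical step I expect to need is that $\{T_\beta(T)\mid T\in\Theta\}$ is itself a Jaffard family on $T_\beta(A)$; this should follow from the closure of Jaffard families under compatible overrings, and I would invoke the relevant machinery from \cite{starloc}. Granted this, the inductive step reduces to the case $\beta=0$ applied to the Jaffard family $\{T_\beta(T)\}$ on $T_\beta(A)$, i.e., to the claim that a maximal ideal $Q$ is critical in $A$ if and only if $QT$ is critical in the unique $T\in\Theta$ with $QT\ne T$.

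For this criticality transfer, I would argue both directions via the decomposition of finitely generated ideals across a Jaffard family. If $I\subseteq Q$ is finitely generated and radical in $A$, then $IT$ is finitely generated, contained in $QT$, and radical in $T$, since $\nu_{IT}(NT)=\nu_I(N)\le 1$ for all $N$ in the $T$-component. Conversely, given a finitely generated radical $J\subseteq QT$ in $T$, one should produce a finitely generated $I\subseteq Q$ in $A$ whose $T$-component equals $J$ and whose other components are trivial (a standard lifting in Jaffard families); radicality of $I$ then follows maximal ideal by maximal ideal. The main obstacle I anticipate is the verification that the SP-derived rings inherit the Jaffard structure at each stage, as this is exactly what enables the reduction of the successor case to the base criticality lemma; once that is in hand, the induction runs smoothly and (a), hence (b), is proved.
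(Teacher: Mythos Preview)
Your proposal is correct and follows the same overall architecture as the paper: reduce (b) to (a), then prove (a) by transfinite induction on $\alpha$, with the only substantive step being the criticality transfer at the base (which then gets replayed at every successor level). Two small differences are worth pointing out.

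First, for the converse direction of the criticality transfer you propose to \emph{lift} a finitely generated radical $J\subseteq QT$ to an $I\subseteq Q$ in $A$ with prescribed components. The paper does something simpler: it just takes the contraction $J':=J\cap A$. Contraction preserves radicality for free, $J'\subseteq QT\cap A=Q$, and finite generation of $J'$ is exactly \cite[Lemma 5.9]{starloc} (Jaffard overrings descend finite generation). This avoids having to talk about the other components at all.

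Second, rather than proving that $\{T_\beta(T)\mid T\in\Theta\}$ is again a Jaffard family on $T_\beta(A)$, the paper records the equivalent but lighter statement that the SP-derived sequences match up via $T_\alpha = T\cdot A_\alpha$ (where $\{A_\alpha\}$ and $\{T_\alpha\}$ denote the SP-derived sequences of $A$ and of $T$), which is immediate from the inductive identification of $\Crit^\beta$ together with $A_M=T_{MT}$. The successor step then reads: $\sph(P)=\alpha$ iff $PA_\alpha$ is non-critical in $A_\alpha$ iff $PT_\alpha=P(TA_\alpha)$ is non-critical in $T_\alpha$ iff $\sph(PT)=\alpha$. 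Your framing is not wrong, but the paper's avoids re-verifying the Jaffard axioms at each stage.
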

\begin{proof}
Clearly the second statement is a direct consequence of the first one.

We first show that $P$ is critical if and only if $PT$ is critical. If $P$ is not critical,  there is a finitely generated radical ideal $I\subseteq P$; then, $IT$ is a finitely generated radical ideal of $T$ contained in $PT$, and thus $PT$ is not critical. Conversely, if $PT$ is not critical, then there is a finitely generated radical ideal $J\subseteq PT$. Then, $J':=J\cap A$ is radical, and since $T$ is a Jaffard overring of $A$, $J'$ is finitely generated too \cite[Lemma 5.9]{starloc}; since $J'\subseteq PT\cap A=P$ we have that $P$ is not critical.

Suppose now that $\sph(Q)=\sph(QT)$ whenever $Q\neq QT$ and either $\sph(Q)<\alpha$ or $\sph(QT)<\alpha$. Suppose that $\sph(P)\geq\alpha$ and $PT\neq T$. Let $\{A_\alpha\}$ and $\{T_\alpha\}$ be the SP-derived sequences of $A$ and $T$, respectively; the inductive hypothesis imply that $T_\alpha=TA_\alpha$. In particular, $\sph(P)=\alpha$ if and only if $P$ is not critical in $A_\alpha$, if and only if $P$ is not critical in $TA_\alpha=T_\alpha$, if and only if $\sph(PT)=\alpha$. By induction, $\sph(P)=\sph(PT)$ for all $P$. 
\end{proof}

\begin{corollary}\label{cor:ic-ded}
Let $D$ be a Dedekind domain with quotient field $K$, $\KK=\{K_n\}_{n\in\insN}$ be a sequence of finite field extensions of $K$, and let $A$ be the integral closure of $D$ in $K_\infty:=\bigcup_nK_n$. Then,
\begin{equation*}
\spr(A)=\sup_{M\in\Max(D)}\spr(\tree(D_M,\KK))
\end{equation*}
\end{corollary}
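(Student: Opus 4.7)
The plan is to realize the right-hand side as the supremum of SP-ranks along a Jaffard family on $A$ and then convert each summand to a tree SP-rank via Corollary \ref{cor:spr-corresp}.

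First, for each $M\in\Max(D)$, let $A_M$ denote the localization of $A$ at the multiplicative set $D\setminus M$. Since $D\setminus M$ consists of units in $K_\infty$, and since integral closure commutes with localization, $A_M$ is exactly the integral closure of the DVR $D_M$ in $K_\infty$. Moreover, the maximal ideals of $A_M$ are precisely the extensions to $A_M$ of the maximal ideals of $A$ that contract to $M$ in $D$. I would then argue that $\Theta:=\{A_M\mid M\in\Max(D)\}$ is a Jaffard family of $A$: on the base $D$ the localization family $\{D_M\}_{M\in\Max(D)}$ is a Jaffard family of the Dedekind domain $D$ (this is standard, see \cite[Section 6.3]{fontana-factoring} and \cite{starloc}), and the Jaffard property lifts to the integral extension $A$ essentially because every maximal ideal of $A$ lies over a unique maximal ideal of $D$ and every finitely generated ideal of $A$, being contained in only finitely many maximal ideals of any $A_n$ (since $A_n$ is Dedekind), is contained in only finitely many maximal ideals of $A$ lying over distinct $M\in\Max(D)$.

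Second, applying Proposition \ref{prop:spr-jaffard} to $\Theta$ yields
\begin{equation*}
\spr(A)=\sup\{\spr(A_M)\mid M\in\Max(D)\}.
\end{equation*}
Third, for each $M$, we are now in exactly the setting of Section \ref{sect:correspond}: $D_M$ is a DVR, $\KK$ is a chain of finite extensions of $K$ (the quotient field of $D_M$), and $A_M$ is the integral closure of $D_M$ in $K_\infty$. Corollary \ref{cor:spr-corresp} therefore gives $\spr(A_M)=\spr(\tree(D_M,\KK))$. Substituting produces the claimed formula.

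The main obstacle I anticipate is the verification that $\Theta$ is actually a Jaffard family of $A$; everything else is a direct combination of results already proven. If a direct argument becomes awkward, the cleanest route is to appeal to a transfer result from \cite{starloc} that says a Jaffard family on the base ring induces a Jaffard family under integral (or more generally, Prüfer) extensions of almost Dedekind domains, since one-dimensionality and the Prüfer nature of $A$ make the hypotheses automatic.
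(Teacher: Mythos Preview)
Your proposal is correct and follows exactly the paper's approach: define $A_M:=(D\setminus M)^{-1}A$, observe that this is the integral closure of $D_M$ in $K_\infty$, assert that $\Theta=\{A_M\mid M\in\Max(D)\}$ is a Jaffard family on $A$, apply Proposition~\ref{prop:spr-jaffard}, and then Corollary~\ref{cor:spr-corresp}. The only difference is that the paper simply states that $\Theta$ is a Jaffard family without further comment, whereas you spend some effort sketching why; your instinct that this is the only point requiring care is accurate, and your suggested justifications (lifting the Jaffard family from $D$ via the references in \cite{starloc}) are appropriate.
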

\begin{proof}
Let $\Theta:=\{A_M\mid M\in\Max(D)\}$, where $A_M:=(D\setminus M)^{-1}A$; then, $A_M$ is the integral closure of $D_M$ in $K_\infty$. Then, $\Theta$ is a Jaffard family on $A$, and thus by Proposition \ref{prop:spr-jaffard} we have $\spr(A)=\sup\{\spr(A_M)\mid M\in\Max(D)\}$. By Corollary \ref{cor:spr-corresp}, $\spr(A_M)=\spr(\tree(D_M,\KK))$, and the claim follows.
\end{proof}

Unlike in the DVR case, when $D$ is not semilocal we cannot fully control the extension of all valuations induced by localizations of $D$, as there are infinitely many of them. However, we can modify our construction to consider only finitely many valuations at a time.
\begin{theorem}\label{teor:countable-limit}
Let $D$ be a Dedekind domain with countably many maximal ideals and let $\alpha$ be a nonzero countable ordinal. Then, there is an almost Dedekind domain $A$ such that $D\subseteq A$ is integral and $\spr(D)=\alpha$.
\end{theorem}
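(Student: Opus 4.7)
The plan is to reduce, via Corollary~\ref{cor:ic-ded}, to constructing a single chain $\KK=\{K_i\}$ of finite separable extensions of the fraction field $K$ of $D$ with the property that, writing $\tree_n:=\tree(D_{M_n},\KK)$ for each maximal ideal in the enumeration $\Max(D)=\{M_n:n\geq 1\}$, we have $\sup_n\spr(\tree_n)=\alpha$. Pick a sequence $(\alpha_n)$ of successor ordinals with $\alpha_n\leq\alpha$ and $\sup_n\alpha_n=\alpha$: if $\alpha$ is a successor, take $\alpha_1=\alpha$ and $\alpha_n=1$ for $n>1$; if $\alpha$ is a limit, use its countable cofinality to choose $\alpha_n$ successor with $\alpha_n\nearrow\alpha$. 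By Theorem~\ref{teor:sprank-balanced}(a) fix, for each $n$, a totally balanced tree $\tree^{(n)}$ of weight $3$ with $\spr(\tree^{(n)})=\alpha_n$; all maximal paths of such a tree are finitely ramified by construction.

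The tower $K_0=K\subseteq K_1\subseteq\cdots$ is built by recursion with $[K_{i+1}:K_i]=3$ for every $i$, using Theorem~\ref{teor:extension-valuations} to simultaneously prescribe the behaviour of the finitely many valuations on $K_i$ that extend $v_{M_1},\ldots,v_{M_{i+1}}$. By induction, for $n\leq i$ each extension of $v_{M_n}$ to $K_i$ has already been matched with a vertex of a copy of $\tree^{(n)}$, and we prescribe its splitting in $K_{i+1}$ to mirror the children of that vertex (ramification indices equal to the corresponding edge weights, all residue-field extensions trivial). For $n=i+1$, which enters the controlled set at stage $i$, we declare each of the (finitely many) extensions of $v_{M_{i+1}}$ to $K_i$ to be the root of a fresh copy of $\tree^{(i+1)}$ and prescribe its splitting analogously. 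Since every vertex of a weight-$3$ balanced tree has outbound weight $3$, the compatibility $\sum_j e_j f_j=3$ holds for every treated valuation (with all $f_j=1$), so Theorem~\ref{teor:extension-valuations} furnishes the desired $K_{i+1}/K_i$; the extensions of $v_{M_j}$ for $j>i+1$ are left unprescribed, but the fundamental equality forces their outbound weights to remain bounded by $3$.

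Set $K_\infty:=\bigcup_i K_i$ and let $A$ be the integral closure of $D$ in $K_\infty$. By construction, for every $n$ and every vertex $v$ of $\tree_n$ at level $n-1$ the subtree $\{v\}^\uparrow$ is isomorphic to $\tree^{(n)}$, while the initial portion of $\tree_n$ up to level $n-1$ is finite; since every maximal path of $\tree^{(n)}$ is finitely ramified, so is every maximal path of $\tree_n$, and therefore $A$ is almost Dedekind. Applying Corollary~\ref{cor:sph-tail} with such a level-$(n-1)$ vertex $v$ on any maximal path $\pi$ of $\tree_n$, the SP-height of $\pi$ equals the SP-height of its tail in $\{v\}^\uparrow\cong\tree^{(n)}$, which is at most $\alpha_n$ and achieves $\alpha_n$ for some $\pi$; hence $\spr(\tree_n)=\alpha_n$. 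Corollary~\ref{cor:ic-ded} then yields $\spr(A)=\sup_n\alpha_n=\alpha$, as required.

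The main obstacle is that Theorem~\ref{teor:extension-valuations} only permits controlling the behaviour of finitely many valuations at each step, so each $v_{M_n}$ necessarily spends stages $0,\ldots,n-2$ outside the prescribed regime and the first $n-1$ levels of $\tree_n$ may exhibit arbitrary (albeit bounded) ramification. The crucial point rescuing the construction is Corollary~\ref{cor:sph-tail}: the SP-height of a maximal path is determined by its eventual tail, so the uncontrolled initial segment contributes nothing to $\spr(\tree_n)$, the intended value $\alpha_n$ is attained exactly, and the supremum over $n$ yields $\alpha$.
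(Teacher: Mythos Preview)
Your proof is correct and follows essentially the same strategy as the paper: enumerate $\Max(D)$, use Theorem~\ref{teor:extension-valuations} to build a tower $\{K_i\}$ in which at each stage finitely many valuations are forced to ramify according to a prescribed totally balanced tree, let each $v_{M_n}$ enter the controlled regime at stage $n-1$, and then invoke Corollary~\ref{cor:sph-tail} to show that the uncontrolled initial segment of $\tree(D_{M_n},\KK)$ does not affect its SP-rank, so Corollary~\ref{cor:ic-ded} gives $\spr(A)=\sup_n\alpha_n=\alpha$. Your write-up is in fact slightly more explicit than the paper's (you name Corollary~\ref{cor:sph-tail} where the paper just says ``it follows that'', and you spell out why $A$ is almost Dedekind and why the unprescribed valuations cause no harm); the only cosmetic difference is your choice of $\alpha_n$ in the successor case ($\alpha_1=\alpha$, $\alpha_n=1$ otherwise) versus the paper's constant sequence $\alpha_n=\alpha$.
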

\begin{proof}
We proceed by induction on $\alpha$. If $\alpha=1$ it is enough to take $A=D$.

Let now $\tree_1,\ldots,\tree_n,\ldots$ be a sequence of totally balanced trees with the same weight $d$, and let $\Max(D)=\{M_1,\ldots,M_n,\ldots\}$; let also $v_i$ be the valuation relative to $M_i$. Using the extension theorem, we construct a sequence $\KK=\{K_n\}_{n\in\insN}$ of extensions in the following way:
\begin{itemize}
\item $K_1$ is an extension where $v_1$ is ramified as the first level of $\tree_1$;
\item $K_2$ is an extension of $K_1$ where:
\begin{itemize}
    \item each extension of $v_1$ ramifies as the second level of $\tree_1$:
    \item all extensions of $v_2$ to $K_1$ extend as the first level of $\tree_2$;
\end{itemize}
\item in general, $K_{i+1}$ is an extension of $K_i$ where:
\begin{itemize}
    \item $v_1$ ramifies as the $(i+1)$-th level of $\tree_1$;
    \item each extension of $v_2$ to $K_1$ extends as the $i$-th level of $\tree_2$;
    \item each extension of $v_3$ to $K_2$ extends as the $(i-1)$-th level of $\tree_2$;
    \item each extension of $v_{i+1}$ to $K_i$ extends as the first level of $\tree_{i+1}$.
\end{itemize}
\end{itemize}
Note that at each step we are always controlling finitely many valuations, since $[K_i:K]<\infty$ and thus $v_j$ has only finitely many extensions to $K_i$ for all $j,i$.

Fix now a $j$. Then, $v_j$ has finitely many extensions to $K_{j-1}$, say $v_{j,1},\ldots,v_{j,t}$, and by construction $\tree(v_{j,i},\{K_{j+k}\}_{k\in\insN})\simeq\tree_j$. In particular, every maximal path $\pi$ of $\tree(v_j,\KK)$ ends up in a tree isomorphic to $\tree_j$; it follows that $\spr(\tree(v_j,\KK))=\spr(\tree_j)$. Since $\spr(\tree(v_j,\KK))=\spr(\tree(D_{M_j},\KK))$, by Corollary \ref{cor:ic-ded} we have
\begin{equation*}
\spr(A)=\sup_{j\in\insN}\spr(\tree(D_{M_j},\KK))=\sup_{j\in\insN}\spr(\tree_j).
\end{equation*}

If $\alpha$ is a successor ordinal, by Theorem \ref{teor:sprank-balanced} we can find a tree $\tree$ with SP-rank $\alpha$, and take $\tree_n:=\tree$ for every $n$; if $\alpha$ is a limit ordinal, take a sequence $\gamma_n$ of successor ordinals with limit $\alpha$ (notice that $\alpha$ has countable cofinality), and use Theorem \ref{teor:sprank-balanced} to find trees with $\spr\tree_n=\gamma_n$. In both cases, the claim follows by the previous equality.
\end{proof}

\section{Topology}\label{sect:topology}
Let $\tree$ be an $\omega$-tree, and consider the set $\MaxPath(\tree)$ of maximal paths of $\tree$. For each $a\in\tree$, let
\begin{equation*}
V(a):=\{\pi\in\MaxPath(\tree)\mid a\in\pi\}.
\end{equation*}

\begin{lemma}
The family $\{V(a)\mid a\in\tree\}$ is closed under taking finite intersections, and thus is a basis of open sets for a topology on $\MaxPath(\tree)$.
\end{lemma}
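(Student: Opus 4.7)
The plan is to show that for any $a,b\in\tree$, the intersection $V(a)\cap V(b)$ is either empty or coincides with $V(c)$ for some $c\in\tree$ (in fact $c\in\{a,b\}$). Combined with the fact that $V(r)=\MaxPath(\tree)$ for $r$ the root of $\tree$ (every maximal path starts at $r$), this will give the basis property.

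First I would dispose of comparability. If $V(a)\cap V(b)\neq\emptyset$, pick any $\pi$ in this intersection: since $\pi$ is a path, it is a chain in $\tree$, and as it contains both $a$ and $b$, these vertexes must be comparable. Conversely, if $a$ and $b$ are incomparable then no chain — in particular, no maximal path — can contain both, so $V(a)\cap V(b)=\emptyset$, which is vacuously a union of members of the family (the empty union).

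The key step is then to establish $V(a)\cap V(b)=V(b)$ in the case $a\leq b$, which reduces to proving $V(b)\subseteq V(a)$. For this I would invoke the defining property that $\{s\in\tree\mid s<b\}$ is well-ordered (and, under the running hypothesis that $\tree$ is an $\omega$-tree, even finite): this implies that there is a unique chain from the root $r$ to $b$ in $\tree$, and, since $a<b$, that chain contains $a$. Any maximal path passing through $b$ must contain this unique initial segment from $r$ to $b$, hence also passes through $a$, proving $V(b)\subseteq V(a)$.

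The only mild subtlety is the uniqueness of the chain from the root to any fixed vertex, which is a standard consequence of the tree axiom; once this is granted, no further computation is needed, and the conclusion that $\{V(a)\}_{a\in\tree}$ is a basis for a topology on $\MaxPath(\tree)$ is immediate.
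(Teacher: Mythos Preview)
Your proof is correct and follows essentially the same approach as the paper's: both reduce to observing that if $V(a)\cap V(b)$ is nonempty then $a$ and $b$ are comparable (since they lie on a common path), and that comparability forces one of $V(a)$, $V(b)$ to contain the other. You supply more detail than the paper---explicitly justifying $V(b)\subseteq V(a)$ via the uniqueness of the root-to-$b$ chain and noting $V(r)=\MaxPath(\tree)$---but the underlying argument is identical.
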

\begin{proof}
Let $a,b\in\tree$. If $V(a)\cap V(b)$ is nonempty, there is a maximal path $\pi$ containing both $a$ and $b$; since a path is totally ordered, we have $a\leq b$ or $b\leq a$. In the former case $V(a)\supseteq V(b)$, while in the latter $V(a)\subseteq V(b)$; in both cases, $V(a)\cap V(b)$ belongs to the family.
\end{proof}

We call this topology the \emph{order topology} on $\MaxPath(\tree)$.

\begin{lemma}\label{lemma:clopen}
Suppose that each element of $\tree$ has only finitely many direct successors. Then, each $V(a)$ is closed in the order topology.
\end{lemma}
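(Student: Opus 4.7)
The plan is to show that $\MaxPath(\tree) \setminus V(a)$ is open, by expressing it as a union of basic open sets $V(b)$ indexed over the other vertices at the same level as $a$.

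First I would verify that, under the hypothesis, each level $\tree(n)$ is finite. This is a straightforward induction on $n$: the root forms the only element of $\tree(0)$, and if $\tree(n)$ is finite then $\tree(n+1)$ consists of direct successors of the elements of $\tree(n)$, a finite union of finite sets. This is the only step that uses the finite-direct-successors hypothesis.

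Second, I would record two simple facts about maximal paths at a given level. Every maximal path $\pi = (a_0,a_1,\ldots)$ starts at the root and contains exactly one element of height $n$, namely $a_n$, since by induction $h(a_i)=i$. Moreover, if $a,b$ are distinct elements of $\tree(n)$ then they are incomparable (the initial segment below a vertex is well-ordered, so has a unique element of any given height), and since a path is totally ordered this forces $V(a)\cap V(b)=\emptyset$.

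Combining these observations, if we write $\tree(n)=\{a,b_1,\dots,b_k\}$ where $n=h(a)$, then every maximal path lies in exactly one of the pairwise-disjoint sets $V(a),V(b_1),\dots,V(b_k)$, so
\[
\MaxPath(\tree)\setminus V(a)=V(b_1)\cup\cdots\cup V(b_k).
\]
This is a finite union of basic open sets, hence open, and so $V(a)$ is closed. The argument is essentially bookkeeping; the only conceptual point is the finiteness of $\tree(n)$, which is exactly where the standing hypothesis is used.
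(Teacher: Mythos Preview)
Your proof is correct and follows essentially the same approach as the paper: both show that the complement of $V(a)$ equals $V(b_1)\cup\cdots\cup V(b_k)$, where $b_1,\ldots,b_k$ are the remaining vertices at the level of $a$. You have simply made explicit the inductive verification that each level is finite and the disjointness of the $V(b_j)$, which the paper's proof leaves implicit.
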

\begin{proof}
Suppose that $a$ is of level $i$: then, the hypothesis guarantees that there are only finitely many elements of level $i$, say $a,b_1,\ldots,b_k$. Then, every maximal path must pass either through $a$ or through some $b_i$; therefore, $V(a)$ is the complement of $V(b_1)\cup\cdots\cup V(b_k)$, which is open. Hence $V(a)$ is also closed.
\end{proof}

\begin{proposition}\label{prop:omef}
Let $V$ be a discrete valuation ring on $K$ and let $\KK=\{K_n\}_{n\in\insN}$ be a chain of finite extensions of $K$. Let $A$ be the integral closure of $V$ in $K_\infty:=\bigcup_nK_n$. Then, the natural correspondence $\Phi:\Max(A)\longrightarrow\MaxPath(\tree(V,\KK))$ of Proposition \ref{prop:correspondence} is a homeomorphism when $\Max(A)$ is endowed with the inverse topology and $\MaxPath(\tree(V,\KK))$ with the order topology.
\end{proposition}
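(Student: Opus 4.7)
The plan is to note that $\Phi$ is already a bijection by Proposition \ref{prop:correspondence}, so I only need to establish bicontinuity. The natural strategy is to compute $\Phi$ and $\Phi^{-1}$ on the basic/subbasic open sets of the two topologies: $\MaxPath(\tree(V,\KK))$ has basis $\{V(a)\}_{a\in\tree(V,\KK)}$, while $\Max(A)^{\inverse}$ has a subbasis given by $\{V(I)\cap\Max(A)\mid I\in\mathcal{I}_f(A)\}$. Matching these two families under $\Phi$ will give the result directly.

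First, I will verify that $\Phi(V(a))$ is open for each vertex $a$. An element $a$ of level $i$ corresponds to a maximal ideal $N$ of $A_i$; since $A_i$ is a Dedekind domain, $N$ (and therefore $NA$) is finitely generated. A maximal path $\pi$ passes through $a$ iff $\Phi^{-1}(\pi)\cap A_i=N$, and by one-dimensionality of $A_i$ this is equivalent to $NA\subseteq\Phi^{-1}(\pi)$. Hence
\begin{equation*}
\Phi(V(a))=V(NA)\cap\Max(A),
\end{equation*}
which is a subbasic open set of the inverse topology; this gives continuity of $\Phi^{-1}$.

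Next, I will verify that $\Phi^{-1}(V(I)\cap\Max(A))$ is open for each finitely generated ideal $I$ of $A$. Write $I=(x_1,\ldots,x_k)A$ and choose $i$ large enough that $x_1,\ldots,x_k\in A_i$, so $I=JA$ where $J:=(x_1,\ldots,x_k)A_i$ is an ideal of the Dedekind domain $A_i$. Only finitely many maximal ideals $N_1,\ldots,N_r$ of $A_i$ contain $J$, and a maximal ideal $M$ of $A$ contains $I$ iff $M\cap A_i$ is one of them. Writing $a_{N_j}$ for the vertex of $\tree(V,\KK)$ corresponding to $N_j$, this translates under $\Phi$ into the statement that $\Phi(M)$ passes through some $a_{N_j}$, so
\begin{equation*}
\Phi^{-1}(V(I)\cap\Max(A))=V(a_{N_1})\cup\cdots\cup V(a_{N_r}),
\end{equation*}
which is a finite union of basic open sets. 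This gives continuity of $\Phi$.

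I do not expect any serious obstacle here: the argument is essentially a careful translation between the combinatorial description of vertexes (as maximal ideals of the intermediate Dedekind domains $A_i$) and the ideal-theoretic description of finitely generated ideals of $A$ (as extensions from some $A_i$). The only point deserving a moment of care is the observation that a finitely generated ideal of $A$ must be extended from some level $A_i$; once this reduction is made, the Dedekind property of $A_i$ forces the preimage to be a \emph{finite} union of basic opens, and everything falls into place.
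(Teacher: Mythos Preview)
Your approach is correct and essentially identical to the paper's: match each basic open $V(a)$ of $\MaxPath(\tree)$ with $V(NA)\cap\Max(A)$, and each subbasic open $V(I)\cap\Max(A)$ with a finite union $\bigcup_j V(a_{N_j})$ coming from the prime factorization of $I\cap A_i$ in the Dedekind domain $A_i$. The only slip is that you have systematically interchanged $\Phi$ and $\Phi^{-1}$: since $\Phi$ goes from $\Max(A)$ to $\MaxPath(\tree)$, your first display should read $\Phi^{-1}(V(a))=V(NA)\cap\Max(A)$ (this gives continuity of $\Phi$, not of $\Phi^{-1}$), and your second should read $\Phi(V(I)\cap\Max(A))=\bigcup_j V(a_{N_j})$ (this gives openness of $\Phi$, equivalently continuity of $\Phi^{-1}$).
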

\begin{proof}
Let $a\in\tree$. Then, $a$ corresponds to a prime ideal $P$ of $A_n$, the integral closure of $V$ in $K_n$ (for some $n$), and a path $\pi$ contains $a$ if and only if the corresponding maximal ideal $M:=\Phi^{-1}(\pi)$ contains $P$. Hence, $\Phi^{-1}(V(a))=V(PA)$. Moreover, $P$ is finitely generated (since it is an ideal of a Dedekind domain) and thus $V(PA)$ is open in $\Max(A)^\inverse$. Hence $\Phi$ is continuous.

To prove that $\Phi$ is open, let $I=(x_1,\ldots,x_k)$ be a finitely generated ideal of $A$, let $n$ be an integer such that $x_1,\ldots,x_k\in K_n$, and let $A_n$ be the integral closure of $V$ in $K_n$; then, $J:=(x_1,\ldots,x_k)A_n$ is an ideal of $A_n$, and thus $J=P_1^{e_1}\cdots P_t^{e_t}$ for some prime ideals $P_1,\ldots,P_t$ of $A_n$; hence,
\begin{equation*}
\Phi(V(I))=\Phi(V(JA))=\Phi\left(\bigcup_{i=1}^tV(P_iA)\right)=\bigcup_{i=1}^t\Phi(V(P_iA)).
\end{equation*}
Let $b_i$ be the element of $\tree$ corresponding to $P_i$; by the previous part of the proof, $\Phi(V(P_iA))=V(b_i)$; hence, $\Phi(V(I))$ is a union of open sets, and thus it is itself open. Thus, $\Phi$ is a homeomorphism.
\end{proof}

Let $X$ be a topological space. A point $x\in X$ is \emph{isolated} if $\{x\}$ is an open set; if $x$ is not isolated, $x$ is called a \emph{limit point}. The set of limit points of $X$ is called the \emph{derived set} of $X$, and is denoted by $\deriv(X)$. More generally, if $\alpha$ is an ordinal number, we set
\begin{itemize}
\item $\deriv^0(X):=X$;
\item if $\alpha=\beta+1$ is a successor ordinal, $\deriv^\alpha(X):=\deriv(\deriv^\beta(X))$;
\item if $\alpha$ is a limit ordinal, $\displaystyle{\deriv^\alpha(X):=\bigcap_{\beta<\alpha}\deriv^\beta(X)}$.
\end{itemize}
The smallest ordinal $\alpha$ such that $\deriv^\alpha(X)=\deriv^{\alpha+1}(X)$ is called the \emph{Cantor-Bendixson rank} of $X$. If $X=\deriv(X)$ (i.e., if $X$ has no limit points) then $X$ is said to be \emph{perfect}.
\begin{lemma}
Let $\tree$ be a tree and $\pi\in\MaxPath(\tree)$. Then, $\pi$ is isolated if and only if there is a $v\in\tree$ such that $\pi$ is the only maximal path containing $v$.
\end{lemma}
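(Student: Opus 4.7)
The plan is to unwind the definition of isolated point against the explicit basis $\{V(a):a\in\tree\}$ of the order topology; the statement is essentially tautological once this is set up, so the proof will be very short.

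For the forward direction, I would assume $\pi$ is isolated, so that $\{\pi\}$ is open. Since the sets $V(a)$ form a basis of the order topology and $\pi\in\{\pi\}$, there must exist some $v\in\tree$ with $\pi\in V(v)\subseteq\{\pi\}$. By definition of $V(v)$, this means $v\in\pi$ and every maximal path containing $v$ equals $\pi$, which is exactly the desired condition.

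For the converse, suppose $v\in\tree$ is such that $\pi$ is the only maximal path passing through $v$. Then $V(v)=\{\pi\}$, and $V(v)$ is a basic open set, so $\{\pi\}$ is open; hence $\pi$ is isolated.

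There is no real obstacle here: the claim is just the translation of the definition of isolated point in a space whose topology is described by an explicit basis of subbasic form $V(a)$. The only very small thing to note is that basic opens of the order topology are already the $V(a)$ themselves (since the family is closed under finite intersections, as shown in the preceding lemma), so one does not have to descend to arbitrary intersections of subbasic opens when witnessing the isolation of $\pi$.
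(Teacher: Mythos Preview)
Your proof is correct and matches the paper's own argument essentially line for line: both directions are handled by directly unpacking the definition of isolated point against the basis $\{V(a)\}$, using that this family is already closed under finite intersections. There is nothing to add.
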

\begin{proof}
If there is such $v$, then $V(v)=\{\pi\}$ and $\pi$ is isolated. If $\pi$ is isolated, then $\{\pi\}$ is an open set. Since the family $\{V(a)\}$ is a basis that is closed by finite intersection, it must be $\{\pi\}=V(v)$ for some $v$; hence $v\in\pi$ and $\pi$ is the only maximal path through $v$.
\end{proof}

\begin{proposition}
Let $\alpha$ be a countable successor ordinal, and let $V$ be a discrete valuation domain with finite residue field. Then, there is an almost Dedekind domain $A$ with SP-rank $\alpha$ such that $\Crit^\beta(A)=\deriv^\beta(\Max(A)^\inverse)$ for every $\beta$.
\end{proposition}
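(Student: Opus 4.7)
The plan is to translate the statement via Proposition \ref{prop:omef} and the Theorem at the end of Section 4, so that the desired equality $\Crit^\beta(A)=\deriv^\beta(\Max(A)^\inverse)$ becomes the intrinsic tree-theoretic equality
\begin{equation*}
\Crit^\beta(\tree)=\deriv^\beta(\MaxPath(\tree))
\end{equation*}
for the ramification tree of a suitable chain $\KK$. For $\tree$ I will take the tree $\tree_\alpha$ produced in the proof of Theorem \ref{teor:sprank-countable}, starting from a totally ordered $\omega$-tree $\tree_0$ with unit edge weights. Since $V$ has finite residue field, Proposition \ref{prop:tree->val:locbound} realises $\tree_\alpha$ as $\tree(V,\KK)$ for some sequence $\KK$ of finite separable extensions, and $A$ is then the integral closure of $V$ in $K_\infty$; Corollary \ref{cor:spr-corresp} gives $\spr(A)=\alpha$.

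Every vertex of $\tree_\alpha$ has at most two direct successors, so Lemma \ref{lemma:clopen} makes each $V(a)$ clopen in the order topology. The structural fact I want to exploit is that for $\tree'=\Lambda(\tree'_1,\tree'_2,\ldots)$ with main path $\pi_\infty=(x_i)_{i\geq 0}$, there is a clopen decomposition
\begin{equation*}
\MaxPath(\tree')=\{\pi_\infty\}\sqcup\bigsqcup_{i\geq 1}V(r_i),
\end{equation*}
where each $V(r_i)$ is homeomorphic to $\MaxPath(\tree'_i)$ via the subtree isomorphism $\{r_i\}^\uparrow\simeq\tree'_i$. By Corollary \ref{cor:sph-tail} this same decomposition respects $\Crit^\beta$ on each $V(r_i)$-piece, and clopenness gives $\deriv^\beta(\MaxPath(\tree'))\cap V(r_i)=\deriv^\beta(V(r_i))$.

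The core inductive claim is therefore: if each component $\tree'_i$ satisfies $\Crit^\gamma(\tree'_i)=\deriv^\gamma(\MaxPath(\tree'_i))$ for every $\gamma$, then so does $\Lambda(\tree'_1,\tree'_2,\ldots)$. On the $V(r_i)$-pieces the equality is immediate from the previous paragraph, so the remaining work is to track the point $\pi_\infty$. A transfinite induction on $\beta$ will establish
\begin{equation*}
\pi_\infty\in\Crit^\beta(\tree')\iff\limsup_j\spr(\tree'_j)\geq\beta\iff\pi_\infty\in\deriv^\beta(\MaxPath(\tree')).
\end{equation*}
On the $\Crit$-side, $u(\pi_\infty)=\pi_\infty$ and any path leaving some $x_i$ with weight $>1$ must traverse an edge $(x_{j-1},r_j)$ with $j>i$; on the $\deriv$-side, the basic neighborhoods of $\pi_\infty$ are $V(x_i)=\{\pi_\infty\}\cup\bigcup_{j>i}V(r_j)$. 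In both cases a witness exists for every $i$ exactly when infinitely many $\tree'_j$ have $\spr(\tree'_j)\geq\beta$, and the inductive hypothesis on the components makes the two conditions coincide; the limit step is the common intersection of the already-proved successor conditions.

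Iterating this lemma through the recursive definition of $\tree_\alpha$, with base case $\tree_0$ (whose unique maximal path is both isolated and non-critical), yields $\Crit^\beta(\tree_\alpha)=\deriv^\beta(\MaxPath(\tree_\alpha))$ for every $\beta$, and pulling this back through the homeomorphism of Proposition \ref{prop:omef} completes the proof. The main obstacle I anticipate is the bookkeeping in the $\pi_\infty$-induction: one must carefully account for the shift of $1$ between $\spr(\Lambda(\tree'_1,\ldots))$ and $\limsup_j\spr(\tree'_j)$ given by Proposition \ref{prop:spr-constr}, and ensure that the $\Crit$ and $\deriv$ sides of the equivalence remain in lockstep at both successor and limit stages.
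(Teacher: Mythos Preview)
Your proposal is correct and follows essentially the same route as the paper: both use the trees $\tree_\alpha$ from Theorem \ref{teor:sprank-countable}, transfer the problem to $\MaxPath(\tree_\alpha)$ via Proposition \ref{prop:omef}, exploit that each $V(r_i)$ is clopen and homeomorphic to $\MaxPath(\tree_{\gamma_i})$ to handle the non-spine paths by induction, and then treat the spine path $\pi_\infty=(x_i)_i$ separately by comparing its basic neighbourhoods $V(x_i)$ with the SP-ranks of the attached subtrees. Your packaging is slightly more modular (an explicit lemma about $\Lambda$ plus a transfinite induction on $\beta$ for $\pi_\infty$), whereas the paper argues the $\pi_\infty$ case by contradiction on its Cantor--Bendixson height, but the content is the same.
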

\begin{proof}
Let $\tree_\alpha$ be the sequence constructed in the proof of Theorem \ref{teor:sprank-countable}. We proceed by induction on $\alpha$.

If $\alpha=1$ then $\MaxPath(\tree)=\{\tree\}$ and the claim is obvious.

Suppose that $\alpha=\gamma+1$, so that $\tree:=\tree_\alpha:=\Lambda(\tree_{\gamma_1},\ldots,\tree_{\gamma_n},\ldots)$ for a sequence $\{\gamma_n\}$ with limit $\gamma$ (where, if $\gamma$ is a successor ordinal, we have $\gamma_i=\gamma$ for all $i$). Set $X:=\MaxPath(\tree)$. For each $i$, consider the set $X_i:=\{\pi\in X\mid \pi\cap\tree_{\gamma_i}\neq\emptyset\}$. Then, $X_i=V(r_i)$ is closed by Lemma \ref{lemma:clopen}. Moreover, $X_i$ is naturally homeomorphic to $\MaxPath(\tree_{\gamma_i})$. Hence, the Cantor-Bendixson rank of $\pi\in X_i$ (as an element of $X$) is the same as its Cantor-Bendixson rank as its restriction to $X_i$; since the SP-rank too only depends on the tail of the path, it follows that for any $\pi\in X_i$ we have $\pi\in\deriv^\beta(X)$ if and only if $\pi\in\Crit^\beta(\tree)$.

Suppose now that $\pi\notin X_i$ for all $i$; therefore, $\pi=(x_i)_{i=1}^\infty$ and its SP-height is $\alpha$. We claim that $\pi\in\deriv^\alpha(X)$. Let $\alpha_0=\beta_0+1$ be the Cantor-Bendixson height of $\pi$. Then, $\pi$ is an isolated point of $\deriv^{\beta_0}(X)$, and thus there is a basic open set $V(x)$ such that $V(x)\cap\deriv^{\beta_0}(X)=\{\pi\}$; thus, $x=x_i$ for some $i$. By construction, $V(x_i)$ contains all paths containing $x_i$, and thus $X_i\subseteq V(x_i)$. Suppose $\beta_0<\beta$, i.e., $\alpha_0<\alpha$. If $\gamma$ (the ordinal just before $\alpha$) is a successor, then $\gamma_i=\gamma$ and thus $X_i$ contains elements of Cantor-Bendixson height $\gamma$, a contradiction. If $\gamma$ is a limit ordinal, then there is a $j$ such that $\gamma_j>\beta_0$; since $V(x_j)\subseteq V(x_i)$, we have $V(x_j)\cap\deriv^{\beta_0}(X)=\{\pi\}$, while $V(x_j)\cap\deriv^{\beta_0}(X)$ should contain some element of $X_j$, again a contradiction. Therefore, we must have $\alpha_0=\alpha$, and $\pi\in\deriv^\alpha(X)$. By induction, the claim is proved.
\end{proof}

The situation for the construction carried on in Theorem \ref{teor:sprank-balanced} is, on the other hand, completely different.
\begin{lemma}\label{lemma:maxpath-compact}
Let $\tree$ be an $\omega$-tree such that every element has only finitely many direct successors. Then, $\MaxPath(\tree)$ is compact.
\end{lemma}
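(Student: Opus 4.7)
The plan is to argue via a König's lemma style argument on a subtree of ``bad'' vertices. It suffices to show that every cover of $\MaxPath(\tree)$ by basic open sets has a finite subcover, so I would suppose for contradiction that $\{V(a_i)\}_{i\in I}$ is such a cover with no finite subcover, and call a vertex $v\in\tree$ \emph{bad} if $V(v)$ is not covered by finitely many of the $V(a_i)$. Denote the set of bad vertices by $\tree'$.

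The first key step is to verify that $\tree'$ is ``unbounded'' in the sense needed for König's lemma. The root $r$ of $\tree$ is bad, since $V(r)=\MaxPath(\tree)$ has no finite subcover by assumption. Next, for any vertex $v\in\tree$ with direct successors $w_1,\ldots,w_k$ (finitely many, by hypothesis), I would establish the identity $V(v)=V(w_1)\cup\cdots\cup V(w_k)$: indeed, any maximal path through $v$ is infinite (since $\tree$ has no maximal elements and every maximal path is infinite by assumption) and so must traverse the edge $(v,w_j)$ for exactly one $j$, while conversely any maximal path through some $w_j$ passes through $v$. Consequently, if each $w_j$ were good, $V(v)$ would be covered by finitely many $V(a_i)$, so $v$ would be good too; contrapositively, every bad vertex has at least one bad direct successor.

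Using this, I would recursively construct a maximal path of bad vertices: set $v_0:=r$, and given a bad $v_n$, pick any bad direct successor $v_{n+1}$. This produces a sequence $\pi=(v_n)_{n\in\insN}\in\MaxPath(\tree)$. Since $\{V(a_i)\}_{i\in I}$ covers $\MaxPath(\tree)$, there is an index $i_0$ with $\pi\in V(a_{i_0})$, i.e., $a_{i_0}=v_m$ for some $m$; but then $V(v_m)\subseteq V(a_{i_0})$ is covered by a single element of the family, contradicting the badness of $v_m$. The main obstacle is the identity $V(v)=\bigcup_j V(w_j)$, which depends essentially on the finiteness of the set of direct successors together with the standing assumption that $\tree$ is an $\omega$-tree without maximal elements; everything else is bookkeeping.
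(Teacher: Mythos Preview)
Your argument is correct and self-contained. The reduction to basic covers is standard, the identity $V(v)=\bigcup_j V(w_j)$ holds because $\tree$ has no maximal elements (so every maximal path through $v$ continues to some successor), and the K\"onig-type construction of a bad path then yields the contradiction exactly as you describe. One cosmetic point: the identity $V(v)=\bigcup_j V(w_j)$ itself does not need finiteness of the successor set; finiteness is used only in the contrapositive step ``if every $w_j$ is good then $v$ is good,'' where you combine finitely many finite subcovers.

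Your route is genuinely different from the paper's. The paper does not argue combinatorially on $\tree$ at all: it equips $\tree$ with constant edge weights, observes that the hypothesis makes $\tree$ locally bounded, and then invokes Proposition~\ref{prop:tree->val:locbound} to realise $\tree$ as a ramification tree $\tree(v,\KK)$ for a discrete valuation $v$ with finite residue field. Proposition~\ref{prop:omef} then identifies $\MaxPath(\tree)$ with $\Max(A)^{\inverse}$ for the corresponding integral closure $A$, and compactness follows because $A$ has nonzero Jacobson radical, so the inverse and Zariski topologies on $\Max(A)$ agree. In short, the paper transports the question to ring theory via the machinery it has already built, whereas you prove compactness directly by a K\"onig's-lemma argument intrinsic to the tree. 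Your approach is more elementary and portable (it needs no algebra and would stand alone in a purely combinatorial paper); the paper's approach is shorter \emph{in context} and nicely illustrates that the tree--ring correspondence flows in both directions.
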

\begin{proof}
We can consider $\tree$ as a weighted tree by giving to each edge a weight of $1$; since there are only finitely many elements of level $n$, for every $n$, $\tree$ is locally bounded. Let $v$ be a discrete valuation with finite residue field; by Proposition \ref{prop:tree->val:locbound}, we can construct a sequence $\KK$ of extensions such that $\tree(v,\KK)\simeq\tree$. By Proposition \ref{prop:omef}, $\MaxPath(\tree)\simeq\Max(A)^\inverse$, where $A$ is the integral closure of $V$ in $\bigcup_nK_n$. However, since the Jacobson radical of $A$ is nonzero, the inverse topology on $\Max(A)$ coincides with the Zariski topology, and in particular $\Max(A)^\inverse$ is compact. Thus also $\MaxPath(\tree)$ is compact.
\end{proof}

We denote by $\mathbf{2}^\omega$ the space of all countable $\{0,1\}$-sequences, i.e., the product of countably many copies of $\{0,1\}$; the space $\mathbf{2}^\omega$ is also homeomorphic to the Cantor set inside $\mathbb{R}$.
\begin{proposition}
Let $\tree$ be an $\omega$-tree such that every element has at least two but only finitely many direct successors. Then, $\MaxPath(\tree)\simeq\mathbf{2}^\omega$.
\end{proposition}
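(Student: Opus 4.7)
The plan is to apply Brouwer's topological characterization of the Cantor set: a nonempty, compact, metrizable, totally disconnected, perfect space is homeomorphic to $\mathbf{2}^\omega$. So it suffices to check these five properties for $X:=\MaxPath(\tree)$ equipped with the order topology.

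First, since each vertex has only finitely many direct successors and the root has finitely many direct successors, an easy induction on $n$ shows that $\tree(n)$ is finite for every $n$, so $\tree$ is countable; hence the basis $\{V(a)\mid a\in\tree\}$ is countable. Nonemptiness of $X$ follows because $\tree$ has no maximal elements, so starting from the root one can inductively choose successors and obtain a maximal path. Compactness is immediate from Lemma \ref{lemma:maxpath-compact}. For Hausdorffness, given distinct $\pi_1,\pi_2\in X$ there is some smallest level $n$ at which they differ; then their $n$-th vertexes $a_1\neq a_2$ give disjoint basic open sets $V(a_1),V(a_2)$ separating them.

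Next, total disconnectedness: by Lemma \ref{lemma:clopen} each $V(a)$ is clopen, so $X$ has a basis of clopen sets. A compact Hausdorff space with a basis of clopen sets is zero-dimensional, hence totally disconnected. Combined with the countable basis, compactness, and Hausdorffness, the Urysohn metrization theorem (or directly: compact Hausdorff second countable implies metrizable) yields metrizability.

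Finally, perfectness: suppose some $\pi\in X$ is isolated. Then $\{\pi\}$ is open, and since $\{V(a)\}$ is a basis closed under finite intersection, there exists $v\in\tree$ with $V(v)=\{\pi\}$, meaning $\pi$ is the only maximal path containing $v$. But by hypothesis $v$ has at least two direct successors $w_1,w_2$; at most one of them lies on $\pi$, say $w_2\notin\pi$. Since $\tree$ has no maximal elements and each vertex has at least one direct successor, we can extend $(r,\ldots,v,w_2,\ldots)$ to a maximal path $\pi'\neq\pi$ that also contains $v$, a contradiction. Therefore $X$ has no isolated points, completing the verification and yielding $X\simeq\mathbf{2}^\omega$.

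The main step is really the verification of perfectness and the use of Brouwer's theorem; the latter is standard, and the former follows cleanly from the assumption that every vertex has at least two direct successors together with the fact that $\tree$ has no maximal elements. The finiteness hypothesis on the number of direct successors is exactly what is needed to invoke Lemma \ref{lemma:clopen} and Lemma \ref{lemma:maxpath-compact}, and to ensure second countability.
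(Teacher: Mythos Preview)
Your proof is correct and follows essentially the same route as the paper: both verify the five conditions of Brouwer's characterization of $\mathbf{2}^\omega$ (nonempty, compact, metrizable, totally disconnected, perfect), using Lemma~\ref{lemma:maxpath-compact} for compactness, Lemma~\ref{lemma:clopen} for the clopen basis, second countability plus Urysohn for metrizability, and the ``at least two successors'' hypothesis for perfectness. Your argument is in fact slightly more explicit than the paper's in a couple of places (the Hausdorff separation and the construction of the second path through $v$), but the strategy is identical.
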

\begin{proof}
We show that $\MaxPath(\tree)$ is non-empty, perfect, compact, totally disconnected, and metrizable; these properties characterize $\mathbf{2}^\omega$ \cite[Chapter 30]{willard}.

Clearly $\MaxPath(\tree)$ is non-empty, and it is compact by Lemma \ref{lemma:maxpath-compact}. To show that it is perfect, we must show that it has no isolated points; however, if $\pi$ is an isolated point, there should be an $a$ such that $V(a)=\{\pi\}$, i.e., such that there is a unique maximal path containing $a$; this contradicts the fact that $a$ has at least two direct successors.


By Lemma \ref{lemma:clopen}, $\MaxPath(\tree)$ has a basis of clopen subsets; since it is also $T_1$ (for each $\pi$ we have $\{\pi\}=\bigcap\{V(a)\mid a\in\pi\}$), by \cite[Theorem 29.5]{willard} $\MaxPath(\tree)$ is totally disconnected. 

By \cite[Lemma 29.6]{willard}, $\MaxPath(\tree)$ is also Hausdorff, and being compact it is also normal (hence regular). Furthermore, it is second countable since $\tree$ is countable and thus the set of all the $V(a)$ is countable. By Urysohn's metrization theorem (see e.g. \cite[Theorem 23.1]{willard}) $\MaxPath(\tree)$ is also metrizable. 

Therefore, $\MaxPath(\tree)$ is non-empty, perfect, compact, totally disconnected, and metrizable, and thus homeomorphic to $\mathbf{2}^\omega$.
\end{proof}

\begin{corollary}
Let $\tree$ be one of the trees $\tree_\alpha$ constructed in the proof of Theorem \ref{teor:sprank-balanced}. Then, $\MaxPath(\tree)\simeq \mathbf{2}^\omega$.
\end{corollary}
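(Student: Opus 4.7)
The plan is to deduce this as a direct corollary of the preceding proposition, which asserts that $\MaxPath(\tree) \simeq \mathbf{2}^\omega$ whenever $\tree$ is an $\omega$-tree in which every vertex has at least two, and only finitely many, direct successors. So the only real task is to verify that the trees $\tree_\alpha$ from Theorem \ref{teor:sprank-balanced} meet this combinatorial condition.

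I would proceed by induction on $\alpha$, tracking the invariant that every vertex of $\tree_\alpha$ has either two or three direct successors. The base case $\tree_1$ is the $\omega$-tree in which every vertex has exactly three direct successors, so the invariant holds trivially. For the inductive step, $\tree_\alpha = \Lambda(\tree'_1,\tree'_2,\ldots)$ where each $\tree'_i$ is one of the $\tree_\beta$ with $\beta<\alpha$ and hence satisfies the invariant. Inspecting the $\Lambda$ construction: each $x_i$ has precisely two direct successors, namely $x_{i+1}$ and $r_{i+1}$; each $r_i$ inherits the successors it had as the root of $\tree'_i$; and every other vertex of $\tree_\alpha$ lies in some $\tree'_i$ with the same direct successors as before. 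In all cases, a vertex has between $2$ and $3$ direct successors, so the invariant propagates.

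Having established the invariant, the hypothesis of the preceding proposition applies to every $\tree_\alpha$, and the conclusion $\MaxPath(\tree_\alpha) \simeq \mathbf{2}^\omega$ follows immediately. There is no real obstacle here; the work has been done in the preceding proposition (which in turn uses Lemma \ref{lemma:maxpath-compact} to get compactness from having finitely many direct successors, and uses the existence of at least two direct successors at every vertex to rule out isolated points). The only thing worth emphasizing is that the specific SP-rank of $\tree_\alpha$ is irrelevant to the topological conclusion: any countable $\omega$-tree in which every vertex has a bounded, uniformly positive (at least two) number of direct successors yields a Cantor maximal-path space.
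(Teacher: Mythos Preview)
Your proof is correct and follows the same route as the paper: verify that every vertex of $\tree_\alpha$ has at least two and only finitely many direct successors, then invoke the preceding proposition. Your inductive verification of the successor count is more explicit than the paper's (which simply notes ``by construction, every element of $\tree$ has at least two direct successors''), but the argument is essentially identical.
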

\begin{proof}
By construction, every element of $\tree$ has at least two direct successors. Hence, we can apply the previous proposition to all the trees $\tree_\alpha$, and thus $\MaxPath(\tree)\simeq\mathbf{2}^\omega$.
\end{proof}

In particular, if $A$ is one of the almost Dedekind domains constructed in Theorem \ref{teor:sprank-balanced}, then $\Max(A)^\inverse\simeq\mathbf{2}^\omega$.

\bibliographystyle{plain}
\bibliography{biblio}
\end{document}